\documentclass[a4paper, 12pt]{amsart}

\usepackage[english]{babel}
\usepackage[T1]{fontenc}
\usepackage[applemac]{inputenc}
\usepackage{verbatim}
\usepackage{color}
\usepackage{amsmath}
\usepackage{amssymb}
\usepackage{amsthm}
\usepackage{psfrag}
\usepackage{graphicx}
\usepackage{braket}
\usepackage{geometry}
\usepackage{hyperref}
\usepackage[usenames,dvipsnames]{pstricks}
\usepackage{epsfig}
\usepackage{pst-grad} 
\usepackage{pst-plot} 

\usepackage{pstricks, pst-node, graphics}
\newpsobject{showgrid}{psgrid}{subgriddiv=1,griddots=10,gridlabels=6pt}

\usepackage{enumerate}

\renewcommand{\phi}{\varphi}
\newcommand{\Z}{\mathbb{Z}}
\newcommand{\Size}[1]{\left\lvert #1 \right\rvert}

\newcommand{\Span}[1]{\left\langle\, #1 \,\right\rangle}
\DeclareMathOperator{\End}{End}

\newcommand{\K}[0]{\mathcal{K}}
\newcommand{\Hc}[0]{\mathcal{H}}

\renewcommand{\theta}[0]{\vartheta}
\renewcommand{\phi}[0]{\varphi}

\newcommand{\F}{\mathbb{F}}
\DeclareMathOperator{\GL}{GL}
\DeclareMathOperator{\AGL}{AGL}
\DeclareMathOperator{\Alt}{Alt}
\DeclareMathOperator{\Sym}{Sym}
\DeclareMathOperator{\PSL}{PSL}
\DeclareMathOperator{\Aut}{Aut}
\DeclareMathOperator{\Out}{Out}
\newtheorem{theo}{Theorem}[section]

\newtheorem{remark}[theo]{Remark}

\newtheorem{lemma}[theo]{Lemma}
\newtheorem{definition}[theo]{Definition}

\newtheorem{cor}[theo]{Corollary}

\numberwithin{equation}{section}

\begin{document}

\date{28 July 2015, 10:16 CEST --- Version 7.12
}

\title[GOST]%
      {The group generated by\\ 
      the round functions of\\
      a GOST-like cipher}
      
\author{R. Aragona}

\address[R.~Aragona]%
 {Dipartimento di Matematica\\
  Universit\`a degli Studi di Trento\\
  via Sommarive 14\\
  I-38123 Trento\\
  Italy} 

\email{riccardo.aragona@unitn.it} 

\urladdr{http://science.unitn.it/$\sim$aragona/}

\author{A. Caranti}

\address[A.~Caranti]%
 {Dipartimento di Matematica\\
  Universit\`a degli Studi di Trento\\
  via Sommarive 14\\
  I-38123 Trento\\
  Italy} 

\email{andrea.caranti@unitn.it} 

\urladdr{http://science.unitn.it/$\sim$caranti/}

\author{M. Sala}

\address[M.~Sala]%
 {Dipartimento di Matematica\\
  Universit\`a degli Studi di Trento\\
  via Sommarive 14\\
  I-38123 Trento\\
  Italy} 

\email{massimilano.sala@unitn.it} 

\urladdr{http://science.unitn.it/$\sim$sala/}

\subjclass[2010]{20B15, 20B35, 94A60}

\keywords{Cryptosystems, Feistel networks, GOST, round functions,
  primitive groups, O'Nan-Scott theorem, wreath products}

\begin{abstract}        
 We  define a  cipher that  is  an extension  of GOST,  and study  the
 permutation group  generated by its  round functions.  We  show that,
 under minimal assumptions on the components of the cipher, this group
 is the alternating group on the plaintext space. This we do by first
 showing that the group is primitive, and then applying the
 O'Nan-Scott classification of primitive groups.
\end{abstract}

\thanks{The second author is grateful to the Department of Mathematics
  of the University of Trento for financial support. The first two
  authors are member of GNSAGA---Italy.}


\maketitle

\thispagestyle{empty}

\bibliographystyle{amsalpha}

\section{Introduction}

When DES  was about to  be broken by brute  force, and Triple  DES was
introduced as  a replacement,  Kaliski, Rivest and  Sherman considered
in~\cite{DES-group} the  question, whether  DES (that  is, the  set of
transformations it defines) is a group. Had this been the case, Triple
DES would have been no different from DES.  They gave evidence for the
fact that  DES was  indeed not a  group, and also  showed that  if the
group generated by  a cipher is too small, then  certain attacks based
on  the birthday  paradox are  possible. Note,  however, that  Murphy,
Paterson  and Wild  \cite{weak} have  constructed a  weak cipher  that
generates  the   whole  symmetric  group  ---   therefore  the  latter
requirement  alone is  not enough  to  guarantee the  strength of  the
cipher.

Coppersmith  and Grossman  defined a  set  of functions  which can  be
adapted for constructing  a block cipher, and  studied the permutation
group  generated  by  them~\cite{CoGr}.  Even  and  Goldreich  defined
certain  DES-like functions,  and  proved that  the permutation  group
generated     by     these     functions    is     the     alternating
group~\cite{EvGo}. Wernsdorf later showed  that the group generated by
the round functions of DES  is the alternating group~\cite{DES-W}, and
Sparr    and   Wernsdorf    showed   that    the   same    holds   for
KASUMI~\cite{KASUMI-W} and AES~\cite{AES-W}. Since the group generated
by a cipher (with independent round  keys) is a normal subgroup of the
group generated by the round functions, and the alternating group is a
simple group, it follows that the former group is also alternating.

In~\cite{us-1,  us-2, us-3}  another approach  to these  questions was
taken, in that  one first shows that the group  generated by the round
functions  of an  AES-like cipher  is a  primitive permutation  group,
provided the  S-boxes satisfy some cryptographic  assumptions, such as
being  weakly  APN functions.   This  shows  that  the cipher  has  no
imprimitivity   trapdoor~\cite{pat}.    And   then   the   O'Nan-Scott
classification of  finite primitive  groups~\cite{LPS, Li} is  used to
show that the  group must be alternating or symmetric.   In this paper
we  apply this  point  of view  to  an extension  of  the cipher  GOST
28147-89 \cite{CGC-cry-misc-dolmatov2010gost}, or  GOST for short, and
show  that its  round functions  generate the  alternating group.   It
might  be  noted that  we  require  only  minimal assumptions  on  the
components  of  this  cipher,  basically only  that  the  S-boxes  are
bijective,  and that  the  rotation has  the  ``right'' extent.   
This  appears to  indicate  that  the  Feistel structure plays an
important role in guaranteeing that the group is large.

Oliynykov  considered   in~\cite{oli2011nonbijective}  ciphertext-only
attacks  on Feistel  networks,  and  proved that  the  use of  secret,
non-bijective S-boxes  allows for  the introduction of  trapdoors.  In
particular, the author applied his results to GOST.

In Section~\ref{sec:1} we  describe GOST.
In  Section~\ref{sec:larger} we  introduce our  extension of  GOST. In
Section~\ref{sec:pri} we  show that the  group generated by  the round
functions    of   this    GOST-like    cipher    is   primitive.    In
Section~\ref{sec:ON-S}  we  analyse  the   cases  in  the  O'Nan-Scott
classification,  to conclude  that the  group generated  by the  round
functions of our GOST-like cipher is the alternating group.

\section{The group generated by the round functions of GOST}
\label{sec:1}

Consider the set $V^{0} = \F_{2}^{n}$, for some $n > 1$.  (Here
$\F_{2}$ is the field with two elements, and see
Remark~\ref{rem:valuesGOST} for the actual values in GOST of this, and
the other parameters we are going to introduce in the following.)  We
consider two group structures on $V^{0}$.  The first operation is the
bitwise sum (XOR), which will be denoted by $+$. The bitwise sum makes
$V^{0}$ into a vector space over $\F_{2}$.

The  second  operation,  denoted  by $\boxplus$,  is  the  sum  modulo
$2^{n}$. That is, we represent $a, b \in V^{0}$ as
\begin{equation*}
  a = (a_{0}, a_{1}, \dots, a_{n-1}),
  \quad
  b = (b_{0}, b_{1}, \dots, b_{n-1}),
\end{equation*}
with $a_{i}, b_{i} \in \Set{0, 1}$ integers, and let
\begin{equation*}
  a \boxplus b
  =
  (c_{0}, c_{1}, \dots, c_{n-1}),
\end{equation*}
where
\begin{multline*}
  (a_{0} + a_{1} 2 + a_{2} 2^{2} + \dots + a_{n-1} 2^{n-1})
  +
  (b_{0} + b_{1} 2 + b_{2} 2^{2} + \dots + b_{n-1} 2^{n-1})
  \equiv\\\equiv
  c_{0} + c_{1} 2 + c_{2} 2^{2} + \dots + c_{n-1} 2^{n-1}
  \pmod{2^{n}},
\end{multline*}
with $c_{i} \in  \Set{0, 1}$ integers. (Here $+$  denotes the ordinary
sum  of integers.)   Therefore $V^{0}$  under $\boxplus$  is the  same
thing as  the group  $\Z_{2^{n}}$ of integers  modulo $2^{n}$,  and we
will denote  it by $(\Z_{2^{n}},  \boxplus)$. We use $\boxminus  a$ to
indicate the opposite of $a \in V^{0}$ with respect to $\boxplus$.

We record a few elementary facts that we will be using repeatedly
without further mention.
\begin{lemma}\label{lemma:subgroups}
  \
  
  \begin{itemize}
  \item  The  subgroups  of   $(\Z_{2^{n}},  \boxplus)$  are  linearly
    ordered; they are the $\Span{2^{q}}$, for $0 \le q \le n$.
  \item The endomorphisms of $(\Z_{2^{n}},  \boxplus)$ are of the form
    $x \mapsto z \, x$, where $z$ is an integer, $0 \le z < 2^{n}$. Such a
    map is an automorphism if and only if $z$ is odd.
  \item Every subgroup of   $(\Z_{2^{n}},  \boxplus)$ is \emph{fully
    invariant} (that is, it is sent into itself by any endomorphism of
    $(\Z_{2^{n}},  \boxplus)$) 
    and thus \emph{characteristic} (that is, it is sent onto itself by any
    automorphism of   $(\Z_{2^{n}},  \boxplus)$).  
  \item 
    The element
    \begin{equation*}
      2^{n-1} = (0, 0, \dots, 0, 1)
    \end{equation*}
    is the only \emph{involution}  (that is, element of  order $2$)  of
    $(\Z_{2^{n}}, \boxplus)$.
    Therefore $2^{n-1}$ is fixed by any automorphism of
    $(\Z_{2^{n}},  \boxplus)$, and it is sent to zero by any
    endomorphism which is not an automorphism.
  \end{itemize}
\end{lemma}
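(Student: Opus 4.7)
The plan is to exploit the observation that $(\Z_{2^{n}}, \boxplus)$ is precisely the cyclic group of order $2^{n}$, generated by $1 = (1, 0, \dots, 0)$ (as follows immediately from the binary representation given in the text), and to deduce each of the four items from standard facts about cyclic groups of prime-power order.

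For the first item I would invoke the correspondence between subgroups of a finite cyclic group $C_{m}$ and divisors of $m$: every divisor $d \mid m$ gives rise to a unique subgroup of order $d$, and these are all the subgroups. For $m = 2^{n}$ the divisors are $1, 2, 4, \dots, 2^{n}$, which form a chain under divisibility, so the subgroup lattice is linearly ordered. Since the element $2^{q}$ has order $2^{n-q}$ under $\boxplus$, the unique subgroup of that order must equal $\Span{2^{q}}$.

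For the second item, since $\Z_{2^{n}}$ is cyclic, any endomorphism is determined by the image of $1$; if $1 \mapsto z$ then $x \mapsto z \, x$ for every $x \in \Z_{2^{n}}$. Such a map is bijective exactly when $z$ itself generates the group, equivalently $\gcd(z, 2^{n}) = 1$, equivalently $z$ is odd. For the third item, the endomorphism $x \mapsto z \, x$ sends $\Span{2^{q}}$ into itself because $z \cdot 2^{q}$ is still a multiple of $2^{q}$; this gives full invariance, and characteristic follows.

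For the fourth item, the equation $2x \equiv 0 \pmod{2^{n}}$ has the unique nonzero solution $x = 2^{n-1}$, so this is the only involution. Any automorphism preserves orders of elements and must therefore fix $2^{n-1}$; conversely, a non-automorphism endomorphism $x \mapsto z \, x$ has $z$ even, so $z \cdot 2^{n-1} \equiv 0 \pmod{2^{n}}$. I do not foresee any genuine obstacle: the whole argument reduces to elementary arithmetic modulo $2^{n}$, and the only verification of substance is the initial identification of $(V^{0}, \boxplus)$ with the cyclic group of order $2^{n}$, which is immediate from the definitions in the excerpt.
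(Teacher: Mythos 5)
Your proposal is correct. The paper offers no proof of this lemma at all---it is stated as a list of ``elementary facts that we will be using repeatedly without further mention''---and your argument is exactly the standard one that the authors are implicitly relying on: identify $(\Z_{2^{n}},\boxplus)$ as the cyclic group of order $2^{n}$ generated by $1=(1,0,\dots,0)$, then read off the subgroup lattice from the divisor chain of $2^{n}$, the endomorphisms from the image of the generator, full invariance from $z\cdot 2^{q}\in\Span{2^{q}}$, and the unique involution from the unique nonzero solution of $2x\equiv 0\pmod{2^{n}}$. All four items check out; the only step worth making explicit is that ``characteristic'' follows from ``fully invariant'' because an injective map of a finite subgroup into itself is onto, which you have implicitly.
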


In  GOST 28147-89  \cite{CGC-cry-misc-dolmatov2010gost} the  plaintext
space is $V = V^{1} \times V^{2}$, where $V^{1}, V^{2}$ are two copies
of $V^{0}$, and the  key   space   $\K$ is another copy of $V^{0}$.
Clearly $V$ inherits both group structures componentwise from $V^{1}, V^{2}$.

\begin{definition}\label{rem:values}
  When considering a subset of $V^{i}$, for $i = 0, 1, 2$, we will call it 
  \begin{itemize}
  \item   a
    \emph{subspace} if it is a subgroup (and thus a vector subspace) of
    $(\F_{2}^{n}, +)$, and 
  \item a $\boxplus$-\emph{subgroup}, or simply a \emph{subgroup}, if it
    is a subgroup of $(\Z_{2^{n}}, \boxplus)$.
  \end{itemize}
  This terminology can be extended to the subsets of $V$.
\end{definition}

\begin{definition}\label{brick}
 We will consider $V^{i}$, for $i = 0, 1, 2$, as the Cartesian product
 \begin{equation}\label{eq:directsum}
   V^{i}
   =
   V^{i}_{1} \times \cdots \times V^{i}_{\delta} 
   = 
   V^{i}_{1} \mid\mid \cdots \mid\mid V^{i}_{\delta}
 \end{equation}
 of $\delta  > 1$  subspaces $V^{i}_j$,  all of the  same dimension  $m >
 1$. (Here $\mid\mid$ denotes \emph{concatenation} of strings.)

 An element $\gamma$ of the  symmetric group $\Sym(V^{i})$ on $V^{i}$ is
 called    a    \emph{bricklayer    transformation}    with    respect
 to~\eqref{eq:directsum}   if  it   preserves   the  direct   product
 decomposition,    that    is,     if    there    are    \emph{S-boxes}
 $\gamma_j\in\mathrm{Sym}(V^{i}_{j})$ such that, writing $v \in V^{i}$ as
 \begin{equation*}
   v
   =
   (v_{1},  \cdots, v_{\delta}),
 \end{equation*}
 with $v_{j} \in V^{i}_{j}$, we have
 \begin{equation*}
    v \gamma 
    =
    (v_{1} \gamma_{1}, \cdots , v_{\delta} \gamma_{\delta}).
 \end{equation*}
 We will refer to each $V^{i}_{j}$ as a \emph{brick}. 
\end{definition}

Let $S = \gamma R \in  \Sym(V^{i})$, where $\gamma \in \Sym(V^{i})$ is
a bricklayer transformation and $R$ is  the right rotation by $r$ bits
(we refer  to $r$ as the  \emph{extent} of the rotation),  with $m\leq
r\leq (\delta-1)m$, that is
\begin{equation*}
  (a_0,\ldots,a_{n-1})R
  =
  (a_{n-r},\ldots,a_{n-1},a_{0},\ldots,a_{n-r-1}).
\end{equation*}

\begin{remark}\label{rem:valuesGOST}
In the case of  GOST, the actual values of the parameters are:
$n=32$, $m=4$, $\delta=8$ and $r=11$. 
\end{remark} 

For $(k_{1}, k_{2}) \in V = V^{1} \times V^{2}$, consider the 
$\boxplus$-translation on $V$ by $(k_{1}, k_{2})$
\begin{equation*}
\begin{array}{rccc}
  \rho_{(k_{1}, k_{2})} 
  :
  &V_{1} \times V_{2}
  &\longrightarrow &V_{1} \times V_{2}\\
  & (x_{1} , x_{2})
  &\longmapsto
  & (x_{1} \boxplus k_{1}, x_{2} \boxplus k_{2}).
\end{array}
\end{equation*}

We now introduce a formal $2n \times 2n$ matrix, which implements the
Feistel structure,
\begin{equation}\label{eq:Sigma}
  \Sigma = \begin{bmatrix}0 & 1\\ 1 & S\end{bmatrix},
\end{equation}
where $0$ and $1$ are $n \times n$ matrices.
This acts (on the right) on $(x_{1}, x_{2}) \in V = V^{1} \times V^{2}$ by
\begin{equation}\label{eq:Sigma-acts}
  (x_{1}, x_{2}) \Sigma
  =
  (x_{2}, x_{1} + x_{2} S).
\end{equation}
Note that $\Sigma$ has the formal inverse matrix
\begin{equation*}
  \Sigma^{-1}
  =
  \begin{bmatrix}
    S & 1\\ 
    1 & 0
  \end{bmatrix}.
\end{equation*}

A round function of GOST with respect  to the round key $k \in \K$ can
now be described as
\begin{equation}\label{eq:round}
  \tau_k 
  =
  \rho_{(0,k)} \, \Sigma \, \rho_{(\boxminus k,0)}.
\end{equation}
(As we  let permutations  act on  the right,  this is  a left-to-right
composition.)  In fact
\begin{align*}
  (x_{1}, x_{2}) \tau_{k}
  &=
  (x_{1}, x_{2}) \, \rho_{(0,k)} \,\Sigma \,\rho_{(\boxminus k,0)}
  \\&=
  (x_{1}, x_{2} \boxplus k)  \,\Sigma \,\rho_{(\boxminus k,0)}
  \\&=
  (x_{2} \boxplus k, x_{1} + (x_{2} \boxplus k) S) \, \rho_{(\boxminus k,0)}
  \\&=
  (x_{2}, x_{1} + (x_{2} \boxplus k) S).
\end{align*}

Thus the group generated by the round functions of GOST is
\begin{equation*}
  \mathcal{G} = \Span{ \tau_{k} : k\in\mathcal{K} }.
\end{equation*}

\section{A larger group}
\label{sec:larger}

In  our notation,  in an  actual GOST  round~\eqref{eq:round} the  key
addition   ($\boxplus$-translation)  preceding   $\Sigma$,  and   that
following $\Sigma$, are  related: the first one acts  only on $V^{2}$,
the  second  one  only  on  $V^{1}$,   and  the  extents  of  the  two
translations are  one the  $\boxplus$-opposite of  the other.  In this
paper  we  will be  studying  a  GOST-like  system  in which  a  round
generalizes the one of GOST:  we allow to $\boxplus$-sum two arbitrary
(unrelated)  pairs  of keys  before  and  after applying  the  Feistel
transformation $\Sigma$.  So in  our cipher the  plaintext $V$  is the
same as that of GOST, while the key space is $\Hc = \K \times \K = V$,
and a round takes the form

\begin{equation}\label{eq:round-like}
  \rho_{k} \, \Sigma \, \rho_{h},
\end{equation}
with $k, h \in \Hc$. Such a  round operates on $(x_{1}, x_{2}) \in V =
V^{1} \times V^{2}$ by
\begin{align*}
  (x_{1}, x_{2}) \, \rho_{k} \, \Sigma \, \rho_{ h}
  &=
  (x_{1} \boxplus k_{1}, x_{2} \boxplus k_{2})  \, \Sigma \,
  \rho_{h}
  \\&=
  (x_{2} \boxplus k_{2}, x_{1} \boxplus k_{1} + (x_{2} \boxplus
  k_{2}) S) \,
  \rho_{h}
  \\&=
  (x_{2} \boxplus k_{2} \boxplus h_{1}, 
  (x_{1} \boxplus k_{1} + (x_{2} \boxplus  k_{2}) S) \boxplus h_{2}),
\end{align*}
where $k_{i}, h_{i} \in V^{i}$.

The corresponding group will thus be
\begin{equation*}
  \Gamma
  =
  \Span{ 
    \rho_{k} \, \Sigma \, \rho_{h}
    :
    k, h \in \Hc
  }
\end{equation*}
Clearly our group $\Gamma$ contains the group $\mathcal{G}$ generated by the
round functions of GOST.

We collect a couple of elementary observations.
\begin{enumerate}
\item    $\Sigma\in\Gamma$. This follows from setting $k = h = 0$ 
  in~\eqref{eq:round-like}.
\item   For all $k \in \Hc$, we have that $\rho_{k} \in \Gamma$. It
  suffices to set $h = 0$ in~\eqref{eq:round-like} and then note that
  $\rho_{k} = (\rho_{k}\Sigma ) \Sigma^{-1}$ is in $\Gamma$, as both
  factors are. 
\end{enumerate}

Therefore
\begin{equation}\label{eq:gamma}
  \Gamma
  =
  \Span{
    \mathcal{T}, \Sigma
    },
\end{equation}
where
\begin{equation*}
  \mathcal{T} =
  \Set{
    \rho_{k}
    :
    k \in \Hc
    }
\end{equation*}
is the group of $\boxplus$-translations on $V$. In particular,
$\Gamma$ acts transitively on $V$,

We now state our main result. 

\begin{theo}\label{theo:sym}
  Let $n = \delta m$, with $\delta\geq 4$ and $m \geq 2$. Consider the
  $\F_{2}$-vector 
  spaces $V^{i} = \F_{2}^{n}$, for $i = 1, 2$, and $V = V^{1} \times
  V^{2}$, under the operation $+$. For $i = 1, 2$, write
  \begin{equation}\label{eq:bricks}
    V^{i} = V_{1}^{i} \times \dots \times V^{i}_{\delta},
  \end{equation}
  where each $V_{j}^{i}$ is a subspace of dimension $m$ over $\F_{2}$.
  
  Let $\boxplus$ be the operation on $V^{i}, V$ defined in the previous
  Section, so that each 
  $(V^{i}, \boxplus)$ is cyclic, of order $2^{n}$. Let $\mathcal{T}$ be
  the group of 
  $\boxplus$-translations $\rho_{k} : x \mapsto x \boxplus k$ on
  $V$, for $k \in V$.
  
  Consider
  \begin{enumerate}
  \item A bricklayer transformation $\gamma$ with respect
    to~\eqref{eq:bricks}. 
  \item 
    The right rotation $R$ by $r$ bits on $V^{i}$.
  \item $S = \gamma R$.
  \item The formal matrix
    \begin{equation*}
      \Sigma = 
      \begin{bmatrix}
        0 & 1\\ 
        1 & S
      \end{bmatrix},
    \end{equation*}
    which operates on $V = V^{1} \times V^{2}$ by
    \begin{equation*}
      (x_{1}, x_{2}) \Sigma
      =
      (x_{2}, x_{1} + x_{2} S).
    \end{equation*}
  \end{enumerate}

  Consider the GOST-like cipher with plaintext  and key space $V$,
  in which a round has the form
  \begin{equation*}
    \rho_{k} \, \Sigma \, \rho_{h},
  \end{equation*}
  for the round keys $k, h \in V$.

  Then the group generated by the round functions is
  \begin{equation*}
    \Gamma
    =
    \Span{
    \mathcal{T}, \Sigma
    },
  \end{equation*}
  where $\mathcal{T} = \Set{\rho_{k} : k \in V}$ is the set of
  $\boxplus$-translations on $V$. 
  
  Assume that
  \begin{enumerate}
  \item the rotation extent $r$ satisfies $m \le r \le (\delta - 1)
    m$, and
  \item the bricklayer transformation $\gamma$ is bijective
    (equivalently, each S-box is bijective, or $S$ is bijective).
  \end{enumerate}
  Then 
  \begin{equation*}
    \Gamma = \Alt(V).
  \end{equation*}
\end{theo}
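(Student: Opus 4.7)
The plan is the two-stage strategy announced in the abstract: first establish that $\Gamma$ is a primitive permutation group on $V$, then use the O'Nan-Scott classification of finite primitive groups to force $\Gamma = \Alt(V)$ or $\Sym(V)$, and finally use a parity check to eliminate $\Sym(V)$. Transitivity is immediate from $\mathcal{T} \subseteq \Gamma$ acting regularly by~\eqref{eq:gamma}, so the real content lies in primitivity and in the subsequent case analysis.

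For primitivity, a nontrivial block system would force the block $U$ containing $0$ to be a proper nontrivial $\boxplus$-subgroup of $V = V^{1} \times V^{2}$, because the regular subgroup $\mathcal{T}$ must respect the block system. Since $\Sigma$ fixes $0$, the condition that $\Sigma$ permute blocks reduces to $U \Sigma = U$, i.e.\ $(u_{2}, u_{1} + u_{2} S) \in U$ for every $(u_{1}, u_{2}) \in U$. Lemma~\ref{lemma:subgroups} describes the $\boxplus$-subgroups of each $V^{i}$ as the chain $\Span{2^{q}}$, $0 \le q \le n$, and this, combined with the condition above, becomes a strong compatibility requirement between $U$ and $S = \gamma R$. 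Using that the rotation extent $r$ satisfies $m \le r \le (\delta-1)m$, so that $R$ genuinely mixes the bricks, together with the bricklayer shape of $\gamma$ and its bijectivity, one is forced to $U = 0$ or $U = V$. I expect this combinatorial compatibility analysis to be the technically hardest part of the argument.

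With primitivity in hand, I would run through the types of the O'Nan-Scott theorem. The \emph{affine} type is excluded because $\boxplus$-translations are not $\F_{2}$-affine in general (carries from low to high bits break linearity), so $\mathcal{T} \not\subseteq \AGL(V, +)$ and a fortiori $\Gamma \not\le \AGL(V, +)$. The \emph{simple diagonal}, \emph{twisted wreath} and \emph{product action} types are excluded by arithmetic and structural constraints: the degree is $2^{2n}$ and $\Gamma$ contains the regular abelian subgroup $\mathcal{T} \cong \Z_{2^{n}} \times \Z_{2^{n}}$ of exponent $2^{n}$, which together with $\delta \ge 4$, $m \ge 2$ is incompatible with a socle of any of these shapes on a set of size $2^{2n}$. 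The \emph{almost simple} type forces the socle to contain $\Alt(V)$ by the known classifications of almost simple primitive groups of prime-power degree containing a regular abelian subgroup of such exponent. This yields $\Alt(V) \le \Gamma$. A short cycle-count then shows every generator of $\Gamma$ is an even permutation: each $\rho_{k}$ decomposes into an even number of $2$-power length cycles, while $\Sigma$ factors as the coordinate swap $(x_{1}, x_{2}) \mapsto (x_{2}, x_{1})$ followed by the shear $(x_{1}, x_{2}) \mapsto (x_{1}, x_{1} S + x_{2})$, both of which are even permutations for $n \ge 2$. Hence $\Gamma \le \Alt(V)$, completing the proof that $\Gamma = \Alt(V)$.
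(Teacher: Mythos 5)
Your overall strategy (primitivity, then O'Nan--Scott via Li's theorem for primitive groups with an abelian regular subgroup, then a parity count) is exactly the paper's, and the parity argument at the end is essentially correct. But the proposal has several genuine gaps. First, in the primitivity setup you assert that ``$\Sigma$ fixes $0$'', so that the block $U$ through $0$ must satisfy $U\Sigma = U$. This is false: $(0,0)\Sigma = (0, 0S)$ and $0S = 0\gamma R$ need not be zero, since the S-boxes are arbitrary bijections. The correct condition is $U\Sigma = U \boxplus (0, 0S)$, and the offset $0S$ is not a harmless nuisance --- it drives the whole analysis (the paper reduces to showing $DS = 0S \boxplus D$ forces $D = \set{0}$, and the final contradiction in the diagonal case is $2^{n-1}S = 0S$). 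Beyond this, you leave the actual combinatorial work --- describing the $\boxplus$-subgroups of $V^{1}\times V^{2}$ via Goursat's Lemma and showing the rotation by $r$ bits with $m \le r \le (\delta-1)m$ destroys the ``type'' of any proper nontrivial subgroup --- entirely unexecuted, while acknowledging it is the hardest part.

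Second, two of your three O'Nan--Scott exclusions do not work as stated. For the affine case, you argue that $\rho_{k}$ is not affine with respect to the given $\F_{2}$-structure $+$ on $V$; but the affine type only asserts that $\Gamma$ embeds in $\AGL(2n,2)$ under \emph{some} identification of $V$ with $\F_{2}^{2n}$, so non-affineness with respect to one particular vector space structure proves nothing. The paper instead bounds the exponent of a $2$-Sylow subgroup of $\AGL(2n,2)$ by roughly $2^{\lceil\log_{2} n\rceil + 2}$, which cannot accommodate the element of order $2^{n}$ in $\mathcal{T}$ once $n \ge 8$. For the product-action (wreath) case, your claim that it is ``incompatible by arithmetic and structural constraints'' is simply false: $2^{2n} = (2^{n})^{2}$ and $\mathcal{T} \cong \Z_{2^{n}} \times \Z_{2^{n}}$ decompose exactly as the wreath case requires with $l = 2$, $c = 2^{n}$. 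Excluding it needs a substantive argument: one shows $\Sigma$ would have to conjugate $K_{1}$ to $K_{2}$, hence map the subgroup $W_{1}$ onto a coset of $W_{2}$, and then a Goursat-style case analysis (four cases, again hinging on the element $2^{n-1}$ and the rotation hypothesis) derives $2^{n-1}S = 0S$ or $2^{n-1} + 2^{n-1}S = 0S$, contradicting bijectivity of $S$ and the rotation extent. Finally, in the almost simple case you cite ``known classifications'' where the paper invokes Guralnick's theorem and must still rule out $\PSL_{\alpha}(\beta)$ of degree $2^{2n}$ by an explicit arithmetic argument; this is a smaller omission but still work that needs doing.
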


Here $\Alt(V)$ is the alternating group, consisting of the even
permutations on the set $V$. We record the following
\begin{lemma}\label{lemma:even}
  All permutations of $\Gamma$ are even, that is, $\Gamma \le \Alt(V)$.
\end{lemma}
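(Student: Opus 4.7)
The plan is to exploit the generating set $\Gamma = \Span{\mathcal{T}, \Sigma}$ from~\eqref{eq:gamma}: since a product of even permutations is even, it suffices to verify that every $\rho_{k} \in \mathcal{T}$ and $\Sigma$ itself lie in $\Alt(V)$. The domain $V$ has size $2^{2n}$ with $n = \delta m \ge 8$, so the $2$-adic valuations will be comfortably large in every cycle count that appears.

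For a translation $\rho_{k}$ with $k \in V$, I would note that the action of $\mathcal{T}$ on $V$ is regular, so the cyclic subgroup $\Span{\rho_{k}}$ partitions $V$ into orbits all of length $\Order{k}$. Since $(V, \boxplus) \cong \Z_{2^{n}} \times \Z_{2^{n}}$, this common length is some $2^{s}$ with $0 \le s \le n$, giving $2^{2n-s}$ disjoint $2^{s}$-cycles and overall sign $(-1)^{2^{2n-s}(2^{s}-1)}$. For $s = 0$ this is the identity; for $s \ge 1$ the factor $2^{s}-1$ is odd while $2^{2n-s}$ is even (as $2n - s \ge n \ge 8$), so $\rho_{k} \in \Alt(V)$.

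For $\Sigma$ I would use the factorization $\Sigma = \iota\,\sigma'$ coming from the formal matrix identity $\bigl(\begin{smallmatrix} 0 & 1 \\ 1 & 0 \end{smallmatrix}\bigr)\bigl(\begin{smallmatrix} 1 & S \\ 0 & 1 \end{smallmatrix}\bigr) = \bigl(\begin{smallmatrix} 0 & 1 \\ 1 & S \end{smallmatrix}\bigr)$, where $\iota : (x_{1},x_{2}) \mapsto (x_{2},x_{1})$ is the coordinate swap and $\sigma' : (y_{1},y_{2}) \mapsto (y_{1}, y_{2} + y_{1}S)$ is the Feistel shear. The swap $\iota$ has the $2^{n}$ fixed points $(x,x)$ and $2^{n-1}(2^{n}-1)$ transpositions, so its sign $(-1)^{2^{n-1}(2^{n}-1)}$ is $+1$. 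The shear $\sigma'$ preserves each fiber $\{y_{1}\} \times V^{2}$, where it acts as the $(V^{2},+)$-translation by the fixed element $y_{1}S$: this is either the identity or, for $y_{1}S \ne 0$, an involution equal to a product of $2^{n-1}$ transpositions. In either case the restriction to every fiber is even, so $\sigma'$, and hence $\Sigma$, is even, yielding $\Gamma \le \Alt(V)$.

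I do not anticipate any serious obstacle: beyond observing that the translation action is regular (so $\rho_{k}$ genuinely has \emph{uniform} orbit length $\Order{k}$, not merely a length dividing it), the whole argument is a careful but routine accounting of parities and powers of $2$.
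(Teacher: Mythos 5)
Your proof is correct and follows essentially the same route as the paper: translations are even because their cycles all have equal $2$-power length and come in an even number, and $\Sigma$ is even via the factorization into the coordinate swap and the Feistel shear, each of which is even. The only (harmless) differences are that you verify all $\rho_{k}$ directly rather than just the two generators $\rho_{(1,0)},\rho_{(0,1)}$ of $\mathcal{T}$, and you check the shear fiberwise instead of counting its fixed points as an involution on all of $V$.
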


\begin{proof}
  The group $\mathcal T$ of $\boxplus$-translations is generated by
  $\rho_{(0,1)}$ and $\rho_{(1,0)}$. Both maps are even permutations,
  as each of them is the product of $2^{n}$ cycles of length $2^{n}$.

  We now show that $\Sigma$ is also an even permutation. $\Sigma$ can
  be considered as the composition of two permutations of order $2$ of
  $V$. The 
  first permutation
  \begin{equation*}
    (x_{1}, x_{2}) \mapsto (x_{2}, x_{1}),
  \end{equation*}
  which exchanges the coordinates, has the $2^{n}$ fixed points $(x,
  x)$, for $x \in V^{0}$,
  and thus it is the product of an even number
  \begin{equation*}
    \frac{2^{2n} - 2^{n}}{2} = 2^{2n-1} - 2^{n-1}
  \end{equation*}
  of $2$-cycles, as $n > 1$. The second permutation
  \begin{equation*}
    (x_{2}, x_{1}) \mapsto (x_{2}, x_{1} + x_{2} S)
  \end{equation*}
  has  also order  $2$, and  has also  $2^{n}$ fixed  points, which
  correspond to the
  value $x_{2} = 0 S^{-1}$, and thus it is also even.
\end{proof}

\begin{remark}
  The arguments of Section~\ref{sec:pri} could be extended to cover
  any rotation different from the identity. For the arguments of
  Subsection~\ref{sec:wreath} to work with any rotation different from
  the identity, however, we would need to add extra hypotheses on the
  behaviour of the last S-box. Therefore we have preferred to stick to
  this setting, which requires only two natural assumptions on the
  cipher.
\end{remark}

Let us consider a cipher consisting of  a fixed number of rounds as in
Theorem~\ref{theo:sym}  with   independent  round  keys.    The  group
$\Gamma'$ generated by (the transformations  of) this cipher will be a
normal subgroup of  $\Gamma$. (See Lemma~\ref{lemma:fromroundstowhole}
below.)   Since the alternating group acting on at least $5$ letters
is simple, it follows $\Gamma'$ 
is also the alternating group on $V$.

\begin{lemma}\label{lemma:fromroundstowhole}
  Let $\Gamma$ be a group generated by elements $g_{i}$, for some
  index set.

  Let $N$ be a positive integer.

  Let $\Gamma'$ be the subgroup of $\Gamma$ generated by all products
  \begin{equation*}
    g_{i_{1}} g_{i_{2}} \dots g_{i_{N}}.
  \end{equation*}

  Then $\Gamma'$ is a normal subgroup of $\Gamma$.
\end{lemma}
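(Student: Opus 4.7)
The plan is to prove normality by checking that each generator of $\Gamma$ normalizes $\Gamma'$. Since the normalizer $\{g \in \Gamma : g\Gamma' g^{-1} = \Gamma'\}$ is a subgroup of $\Gamma$, it suffices to establish $g_i \Gamma' g_i^{-1} = \Gamma'$ for each generator $g_i$. Because we cannot appeal to finiteness of $\Gamma$, I would verify both inclusions $g_i \Gamma' g_i^{-1} \subseteq \Gamma'$ and $g_i^{-1} \Gamma' g_i \subseteq \Gamma'$ directly; as $\Gamma'$ is generated by products of length exactly $N$, this reduces to showing $g_i^{\pm 1} w g_i^{\mp 1} \in \Gamma'$ for every word $w = g_{j_1} g_{j_2} \cdots g_{j_N}$ of length $N$ in the generators.

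The key move is a bookkeeping trick: introduce an auxiliary length-$N$ word, which is itself in $\Gamma'$ and chosen so that after multiplication the offending $g_i^{-1}$ cancels and the remaining expression is a positive word in the generators of length a multiple of $N$. Concretely, for $g_i w g_i^{-1}$ I would multiply on the right by $w' = g_i g_{j_1} \cdots g_{j_{N-1}}$; the interior $g_i^{-1} g_i$ collapses, and the remaining expression $g_i g_{j_1} \cdots g_{j_N} g_{j_1} \cdots g_{j_{N-1}}$ has length $2N$ in the generators, so it factors as a product of two length-$N$ words and hence lies in $\Gamma'$. Since $w' \in \Gamma'$ as well, so does $g_i w g_i^{-1} = (g_i w g_i^{-1} w')(w')^{-1}$.

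The treatment of $g_i^{-1} w g_i$ is symmetric: multiplying on the left by $w'' = g_{j_2} \cdots g_{j_N} g_i$, the cancellation $g_i g_i^{-1}$ again produces a length-$2N$ positive word, namely $g_{j_2} \cdots g_{j_N} g_{j_1} \cdots g_{j_N} g_i$, which is manifestly a product of two length-$N$ words and so lies in $\Gamma'$. Combining the two cases yields $g_i \Gamma' g_i^{-1} = \Gamma'$ for every generator $g_i$, whence $\Gamma' \trianglelefteq \Gamma$.

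The only real obstacle is choosing the auxiliary words $w'$ and $w''$: their first or last letter must be exactly the $g_i$ to be cancelled, while their remaining letters have to be a terminal or initial segment of $w$ of length $N-1$, so that the residue after cancellation is a word in the generators alone of length divisible by $N$. Beyond this small combinatorial observation, the proof is entirely formal and uses no property of $\Gamma$ beyond the existence of the given generating set.
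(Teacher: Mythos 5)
Your proof is correct, and its engine is the same as the paper's: clear the single negative letter in the conjugate by multiplying by a well-chosen product of exactly $N$ generators (hence an element of $\Gamma'$), so that what survives is a positive word of length $2N$, which splits into two length-$N$ blocks and therefore lies in $\Gamma'$. The differences are in the choice of auxiliary word and in the care taken over normality. The paper conjugates only on one side, using the identity $g^{-1}(g_{i_1}\cdots g_{i_N})g=(g^{N})^{-1}(g^{N-1}g_{i_1})(g_{i_2}\cdots g_{i_N}g)$ with the simpler auxiliary element $g^{N}$, and stops there. Strictly speaking, verifying $g^{-1}\Gamma' g\subseteq\Gamma'$ only for the generators $g$ of $\Gamma$ does not by itself yield normality in an arbitrary group: the set of elements satisfying this one-sided containment is a submonoid but need not be a subgroup (compare $\langle a\rangle$ in $\langle a,t\mid t^{-1}at=a^{2}\rangle$, where $t^{-1}at\in\langle a\rangle$ but $tat^{-1}\notin\langle a\rangle$). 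In the paper's application $\Gamma$ is finite, so this is harmless, and the gap is in any case filled by the mirror-image identity; but your version, which checks both $g_i\Gamma'g_i^{-1}\subseteq\Gamma'$ and $g_i^{-1}\Gamma'g_i\subseteq\Gamma'$ and then invokes the fact that the normalizer is a subgroup containing all the $g_i$, proves the lemma exactly as stated, with no finiteness hypothesis. Your auxiliary words $w'=g_ig_{j_1}\cdots g_{j_{N-1}}$ and $w''=g_{j_2}\cdots g_{j_N}g_i$ both check out; if you want to streamline, the single auxiliary element $g_i^{N}$ serves for both sides and is marginally cleaner.
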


\begin{proof}
  We have to show that for all choices of generators $g = g_{i_{0}},
  g_{i_{1}}, g_{i_{2}}, 
  \dots , g_{i_{N}}$ of $\Gamma$, the
  conjugate $g^{-1} (g_{i_{1}} g_{i_{2}} \dots g_{i_{N}}) g$ lies in
  $\Gamma'$.

  We have
  \begin{equation*}
    g^{-1} (g_{i_{1}} g_{i_{2}} \dots g_{i_{N}}) g
    =
    (g^{N})^{-1} (g^{N-1} g_{i_{1}}) (g_{i_{2}} \dots g_{i_{N}} g)
    \in
    \Gamma'.
  \end{equation*}
\end{proof}

Clearly our result for $\Gamma$ has no immediate implication about the
size of the smaller group $\mathcal{G}$ of GOST.

\section{Primitivity}
\label{sec:pri}

We recall a couple of basic  properties of imprimitive groups. Let $G$
be a finite group acting transitively on a set $V$.
\begin{lemma}\label{lemma:block}
  A block  (of imprimitivity)  is of  the form $v  H$, for some $v  \in V$,
  and some proper subgroup  $H$ of $G$ which  properly  contains  the
  stabiliser of $v$  in $G$.
\end{lemma}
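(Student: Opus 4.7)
The plan is standard: construct $H$ as the setwise stabiliser of the block $B$ in $G$, and use transitivity of $G$ on $V$ to identify $B$ with the $H$-orbit of a chosen point.

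First I would fix $v \in B$ and set
\[
H = \{\, g \in G : Bg = B \,\}.
\]
Then $H$ is immediately a subgroup of $G$, and it contains $G_v$: if $vg = v$ then $v \in B \cap Bg$, and the defining property of a block (that $Bg$ either coincides with $B$ or is disjoint from it) forces $Bg = B$. To identify $B$ with $vH$, the inclusion $vH \subseteq B$ is clear from $v \in B$ and $BH = B$. Conversely, given $w \in B$, transitivity of $G$ on $V$ yields some $g \in G$ with $vg = w$; then $w \in B \cap Bg$, so $Bg = B$, $g \in H$, and $w \in vH$.

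The remaining content, namely the strict containments $G_v < H < G$, just records that $B$ is a nontrivial block. If $|B| \ge 2$, pick $w \in B \setminus \{v\}$ and any $g \in G$ with $vg = w$ (which exists by transitivity); then $g \in H \setminus G_v$. If $B \ne V$, pick $u \in V \setminus B$ and any $g \in G$ with $vg = u$; then $u \in Bg \setminus B$, so $Bg \ne B$ and $g \notin H$. There is no genuine obstacle in the argument; the only subtlety worth flagging is that the lemma tacitly refers to nontrivial blocks, and this is exactly what the proper-containment clauses in the statement encode.
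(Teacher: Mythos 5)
Your proof is correct and complete: taking $H$ to be the setwise stabiliser of the block and using transitivity to show $B=vH$ is the standard argument, and you rightly note that the strict containments $G_v < H < G$ correspond exactly to the block being nontrivial (which the lemma tacitly assumes). The paper states this lemma without proof, recalling it as a basic fact about imprimitive actions, so there is nothing to compare against; your argument is the canonical one and fills the omission correctly.
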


\begin{lemma}
  If $T$ is a transitive subgroup of $G$, then a block
  for $G$ is also a block for $T$.
\end{lemma}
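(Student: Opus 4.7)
The plan is to argue directly from the definition of block. Recall that a subset $B \subseteq V$ is a block for a transitive action of $G$ on $V$ precisely when, for every $g \in G$, either $Bg = B$ or $Bg \cap B = \emptyset$. This dichotomy refers only to elements of the acting group, and is inherited automatically by any subgroup: since $T \le G$, every $t \in T$ is in particular an element of $G$, and therefore satisfies $Bt \in \{B\} \cup \{\text{sets disjoint from } B\}$. Consequently the translates $\{Bt : t \in T\}$ are pairwise either equal or disjoint.

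To conclude that $B$ is a block for $T$ in the strict sense (i.e.\ that the translates form a block system), I still need to verify that $\{Bt : t \in T\}$ actually covers $V$. This is exactly where the transitivity hypothesis on $T$ enters: picking any $b \in B$, for every $v \in V$ there exists $t \in T$ with $bt = v$, so $v \in Bt$. The translates therefore partition $V$, and $B$ is a block for $T$.

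The step I expect to be the main subtlety, if any, is recognising that nothing else is needed: no cryptographic input, no appeal to Lemma~\ref{lemma:block}, and no assumption on $B$ being non-trivial (if $B = \{v\}$ or $B = V$, the statement is vacuous in the obvious way). In short, the proof is a one-liner from the definitions, with transitivity of $T$ used solely to guarantee that the $T$-translates of $B$ cover $V$.
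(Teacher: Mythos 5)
Your proof is correct; the paper states this lemma without proof, as a standard fact about imprimitive actions, and your argument is exactly the expected one. The dichotomy $Bt = B$ or $Bt \cap B = \emptyset$ is inherited verbatim from $G$ to the subgroup $T$, and transitivity of $T$ is used only to ensure the translates $\{Bt : t \in T\}$ cover $V$, so nothing is missing.
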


In our case, $\mathcal{T}$ is a transitive subgroup of $\Gamma$. We
first record a trivial observation, which is an immediate consequence
of the fact that the map $v \mapsto \rho_{v}$ is an isomorphism $(V,
\boxplus) \to \mathcal{T}$.
\begin{lemma}\label{lemma:trivobs}
  The subgroups of $\mathcal{T}$ are of the form 
  \begin{equation*}\label{eq:a-subgroup}
    \Set{ \rho_{u} : u \in U },
  \end{equation*}
  where $U$ is a subgroup of $(V, \boxplus)$.
\end{lemma}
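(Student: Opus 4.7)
The plan is to observe that the map $v \mapsto \rho_{v}$ is a group isomorphism from $(V, \boxplus)$ onto $\mathcal{T}$, and then invoke the standard bijective correspondence between subgroups under a group isomorphism.

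First I would verify that the map is a homomorphism: for all $u, v \in V$ and any $x \in V$,
\begin{equation*}
  x (\rho_{u} \rho_{v})
  =
  (x \boxplus u) \boxplus v
  =
  x \boxplus (u \boxplus v)
  =
  x \rho_{u \boxplus v},
\end{equation*}
using associativity of $\boxplus$, which holds componentwise on $V = V^{1} \times V^{2}$ because each factor is the cyclic group $(\Z_{2^{n}}, \boxplus)$. Surjectivity is immediate from the definition $\mathcal{T} = \Set{\rho_{k} : k \in V}$, and injectivity follows because if $\rho_{u}$ is the identity permutation then in particular $0 \rho_{u} = 0 \boxplus u = u = 0$.

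Having established the isomorphism, I would conclude as follows: any subgroup $H$ of $\mathcal{T}$ corresponds, under the inverse isomorphism, to the subgroup $U = \Set{u \in V : \rho_{u} \in H}$ of $(V, \boxplus)$; and conversely any subgroup $U$ of $(V, \boxplus)$ yields a subgroup $\Set{\rho_{u} : u \in U}$ of $\mathcal{T}$. This gives precisely the form asserted in the lemma.

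There is no substantive obstacle here; the statement is essentially bookkeeping about an isomorphism. The only point worth being explicit about is that the group structure on $V$ relevant to $\mathcal{T}$ is $\boxplus$ (acting componentwise on the two copies of $(\Z_{2^{n}}, \boxplus)$), not the bitwise sum $+$, so a subgroup of $\mathcal{T}$ need not correspond to an $\F_{2}$-subspace of $V$ --- a distinction that will matter in the primitivity analysis to follow.
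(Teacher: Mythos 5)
Your proof is correct and matches the paper's approach exactly: the paper states this lemma as an immediate consequence of the fact that $v \mapsto \rho_{v}$ is an isomorphism $(V, \boxplus) \to \mathcal{T}$, which is precisely the isomorphism you verify before invoking the standard subgroup correspondence. Your closing remark that the relevant group structure is $\boxplus$ rather than $+$ is also the right point to emphasise for the later primitivity argument.
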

We obtain
\begin{lemma}\label{rem1}
  If $\Gamma$ acting on $V$ has  a block system, then this consists of
  the cosets  of a $\boxplus$-subgroup of  $V$, that is, it  is of the
  form
  \begin{equation*}
    \Set{
      W \boxplus v
      :
      v \in V
      }
  \end{equation*}
  where $W$ is a non-trivial, proper subgroup of $(V, \boxplus)$.
\end{lemma}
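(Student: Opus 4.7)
The plan is to push the problem down from $\Gamma$ to its transitive subgroup $\mathcal{T}$, and then exploit the fact that $\mathcal{T}$ is not only transitive but regular on $V$ (since the map $v \mapsto \rho_{v}$ is a bijection from $V$ to $\mathcal{T}$). Indeed, a block of imprimitivity $B$ for $\Gamma$ is also a block for $\mathcal{T}$ by the second preparatory lemma of the section, so it suffices to describe blocks for $\mathcal{T}$.

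Fix a point $v_{0} \in V$ lying in such a block $B$. Because $\mathcal{T}$ acts regularly, the stabiliser of $v_{0}$ in $\mathcal{T}$ is trivial. By Lemma~\ref{lemma:block} applied to $\mathcal{T}$, we therefore have $B = v_{0} H$ for some proper subgroup $H$ of $\mathcal{T}$ which strictly contains the trivial stabiliser; equivalently, $H$ is a non-trivial proper subgroup of $\mathcal{T}$. By Lemma~\ref{lemma:trivobs}, $H = \Set{\rho_{u} : u \in U}$ for some non-trivial proper subgroup $U$ of $(V, \boxplus)$.

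Unwinding the action,
\begin{equation*}
  B = v_{0} H = \Set{v_{0} \rho_{u} : u \in U} = \Set{v_{0} \boxplus u : u \in U} = U \boxplus v_{0},
\end{equation*}
where in the last equality we used that $(V, \boxplus)$ is abelian. The full block system is the $\mathcal{T}$-orbit of $B$, which consists of all cosets $U \boxplus v$ for $v \in V$. This is exactly the statement to be proved.

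There is essentially no obstacle here: the proof is a routine chain of identifications, and the only non-automatic input is the regularity of $\mathcal{T}$, which trivialises the stabiliser in the application of Lemma~\ref{lemma:block}. The substantive content lies not in this lemma but in the way it is used in the following section, where one must show that no such non-trivial proper $\boxplus$-subgroup $U$ can actually give rise to a block system for $\Gamma$.
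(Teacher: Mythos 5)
Your proof is correct and is precisely the argument the paper intends: it leaves the lemma without an explicit proof because it follows by chaining the two preparatory lemmas on blocks with Lemma~\ref{lemma:trivobs} and the regularity of $\mathcal{T}$, exactly as you have done. Nothing to add.
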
 

According to Lemma~\ref{rem1}, to prove the primitivity of $\Gamma$ we
have to show that no subgroup of $(V, \boxplus)$ is a block.
 Goursat has characterized~\cite[Sections 11--12]{goursat} the
subgroups of the direct product  of two groups in terms of
suitable sections of the direct factors. (See
also~\cite{Petrillo}.) 
\begin{theo}[Goursat's Lemma]
  \label{theorem:Goursat}
  Let $(G_1,\boxplus)$  and $(G_2,\boxplus)$ be two  groups. There
  exists  a bijection between  
  \begin{enumerate}
  \item 
    the set  of all subgroups  of the 
    direct  product  $G_1\times   G_2$,  and  
  \item 
    the  set   of  all  triples
    $(A/B,C/D,\psi )$, where 
    \begin{itemize}
    \item $A$ is a subgroup of $G_{1}$,
    \item $C$ is a subgroup of $G_{2}$,
    \item $B$ is a normal subgroup of $A$,
    \item $D$ is a normal subgroup of $C$, and
    \item $\psi: A/B\to C/D$ is a group isomorphism.
    \end{itemize}
  \end{enumerate}

  In this bijection, each subgroup of $G_1\times G_2$ can be uniquely
  written as
  \begin{equation}\label{eq:Uphi}
    U_{\psi}= \Set{
      (a,c) \in A \times C 
      :
      (a \boxplus B) \psi =c \boxplus D
      }.
  \end{equation}
\end{theo}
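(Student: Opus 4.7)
The plan is to construct the bijection of Theorem~\ref{theorem:Goursat} explicitly in both directions and then verify that the two constructions are mutually inverse.

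For the direction (1) $\to$ (2), given a subgroup $U \le G_1 \times G_2$, I would take $A = \pi_1(U)$ and $C = \pi_2(U)$ to be the coordinate projections of $U$, and set
\begin{equation*}
  B = \Set{a \in A : (a, e_2) \in U}, \qquad D = \Set{c \in C : (e_1, c) \in U},
\end{equation*}
where $e_1, e_2$ denote the identity elements of $G_1, G_2$. Closure of $U$ under the group operation gives $B \le A$ and $D \le C$; normality $B \isnormal A$ follows because for $a \in A$ (picking any $c$ with $(a,c) \in U$) and $b \in B$ the conjugate $(a,c)^{-1}(b, e_2)(a,c) = (a^{-1} b a, e_2)$ lies in $U$, so $a^{-1} b a \in B$; the argument for $D \isnormal C$ is symmetric. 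I would then define $\psi : A/B \to C/D$ by choosing, for each $a \in A$, some $c$ with $(a, c) \in U$ and setting $\psi(aB) = cD$. Well-definedness is immediate, since two choices $c_1, c_2$ give $(e_1, c_1^{-1} c_2) \in U$, so $c_1^{-1} c_2 \in D$. The map is a homomorphism because $(a_1, c_1)(a_2, c_2) = (a_1 a_2, c_1 c_2) \in U$, surjective by the symmetric argument, and injective because $\psi(aB) = D$ means $(a, d) \in U$ for some $d \in D$, whence $(a, e_2) = (a, d)(e_1, d^{-1}) \in U$ forces $a \in B$.

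For the direction (2) $\to$ (1), given a triple $(A/B, C/D, \psi)$, I would define $U_\psi$ by~\eqref{eq:Uphi}. The subgroup axioms for $U_\psi$ are immediate from the fact that $\psi$ is a homomorphism: if $(a_i, c_i) \in U_\psi$ for $i = 1, 2$, then $\psi((a_1 a_2) B) = \psi(a_1 B)\psi(a_2 B) = (c_1 c_2) D$, so $(a_1 a_2, c_1 c_2) \in U_\psi$, and closure under inverses is analogous. It remains to verify mutual invertibility. Starting from $U$, producing the triple $(A, B, C, D, \psi)$ and re-forming $U_\psi$, each $(a, c) \in U$ clearly lies in $U_\psi$; conversely, for $(a, c) \in U_\psi$, any chosen $c'$ with $(a, c') \in U$ satisfies $c^{-1} c' \in D$ by the construction of $\psi$, so $(e_1, (c')^{-1} c) \in U$ and hence $(a, c) = (a, c')(e_1, (c')^{-1} c) \in U$. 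In the reverse direction, the projections, ``kernels'' and induced isomorphism read off from $U_\psi$ reproduce the original $A, B, C, D, \psi$ by unwinding the definitions.

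The main obstacle is essentially bookkeeping: the well-definedness of $\psi$ and the mutual-inverse property both hinge on the recurrent observation that, whenever $(a, c_1), (a, c_2) \in U$, then $(e_1, c_1^{-1} c_2) \in U$ as well. Once this key point is in hand, the rest consists of careful but routine verifications using the subgroup axioms; the only mildly subtle step is checking normality of $B$ and $D$, which uses the subgroup property of $U$ to conjugate without leaving the appropriate ``kernel''.
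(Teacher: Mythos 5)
Your proof is correct: it is the standard proof of Goursat's Lemma, with the projections $A=\pi_1(U)$, $C=\pi_2(U)$, the ``kernels'' $B$, $D$, the well-definedness check via $(a,c_1),(a,c_2)\in U \Rightarrow (e_1,c_1^{-1}c_2)\in U$, and the verification that the two constructions are mutually inverse all carried out properly. Note that the paper itself gives no proof of this statement --- it cites Goursat's original work and Petrillo --- so there is nothing to compare against; your argument fills that gap with the usual textbook derivation.
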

Let  us consider  the case  when $G_{1}  = G_{2}  = \Z_{2^{n}}$,  with
operation $\boxplus$. Then  $A = \Span{2^{s}}$ and  $C = \Span{2^{t}}$
for  some  $s,  t$, with $0 \le s, t \le n$.
Assume first that
$s \le  t$. Therefore
there is an odd integer $z\geq 1$ such that
\begin{equation*}
  (2^{s} \boxplus B) \psi = z 2^{t} \boxplus D.
\end{equation*}
Let us  consider the endomorphism  $\phi : x  \mapsto z 2^{t-s}  x$ of
$\Z_{2^{n}}$.  Since $2^{s}  \phi  =  z 2^{t}$,  we  have that  $\phi$
induces $\psi$, that is, for $a \in A$
\begin{equation}\label{eq:psivsphi}
  (a \boxplus B) \psi = a \phi \boxplus D.
\end{equation}
If $t \le s$, we have similarly that for the endomorphism $\phi : x
\mapsto z 2^{s - t}  x$ of
$\Z_{2^{n}}$ one has
\begin{equation}\label{eq:psivsphi2}
  (c \boxplus D) \psi^{-1} = c \phi \boxplus B.
\end{equation}
We claim
\begin{lemma}\label{lemma:psiforphi}
  In the above notation, we have 
 \begin{equation}\label{eq:upsi}
    U_{\psi}
    =
    \Set{
      (a, a \phi \boxplus d)
      :
      a \in A, d \in D
      }\quad \text{when $s \le t$,}
  \end{equation}
     
 \begin{equation}\label{eq:upsi2}
    U_{\psi}
    =
    \Set{
      (c \phi \boxplus b, c)
      :
      c \in C, b \in B
      } \quad\text{when $t \le s$.}
  \end{equation}
 
\end{lemma}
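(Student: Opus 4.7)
The plan is to prove the lemma by directly unpacking the definition \eqref{eq:Uphi} of $U_{\psi}$ and using the already-established relations \eqref{eq:psivsphi} and \eqref{eq:psivsphi2}, which express $\psi$ (respectively $\psi^{-1}$) via the explicit endomorphism $\phi$ of $(\Z_{2^{n}}, \boxplus)$ provided by Lemma \ref{lemma:subgroups}. I would treat the case $s \le t$ explicitly; the case $t \le s$ is entirely symmetric, using \eqref{eq:psivsphi2} in place of \eqref{eq:psivsphi} and swapping the roles of $(A, B)$ and $(C, D)$.

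Before checking the set equality \eqref{eq:upsi}, I would verify that $\phi$ maps $A$ into $C$, so that the right-hand side of \eqref{eq:upsi} actually sits inside $A \times C$. If $a = 2^{s} k \in A$, then $a \phi = z\, 2^{t-s} \cdot 2^{s} k = z\, 2^{t} k \in \Span{2^{t}} = C$; since $D \le C$, we conclude $a \phi \boxplus d \in C$ for every $a \in A$ and $d \in D$. For the inclusion $(\supseteq)$, given $(a, a \phi \boxplus d)$, I apply \eqref{eq:psivsphi} to get $(a \boxplus B) \psi = a \phi \boxplus D$, and since $d \in D$ this coset equals $(a \phi \boxplus d) \boxplus D$, so the pair satisfies the defining condition of $U_{\psi}$. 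For $(\subseteq)$, given $(a, c) \in U_{\psi}$, the defining condition combined with \eqref{eq:psivsphi} yields $a \phi \boxplus D = c \boxplus D$, hence $c = a \phi \boxplus d$ for some $d \in D$, as required.

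I do not expect any substantial obstacle here: the entire argument is a translation of the abstract Goursat parametrization into the concrete language in which every endomorphism of $(\Z_{2^{n}}, \boxplus)$ is multiplication by an integer. The only minor point to watch is the case split $s \le t$ versus $t \le s$, since $\phi$ takes the form $x \mapsto z\, 2^{t-s} x$ in the first case and $x \mapsto z\, 2^{s-t} x$ in the second, with the descriptions \eqref{eq:upsi} and \eqref{eq:upsi2} correspondingly transposed.
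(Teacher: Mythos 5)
Your proposal is correct and follows essentially the same route as the paper: both inclusions are obtained by unpacking the defining condition \eqref{eq:Uphi} via the relation \eqref{eq:psivsphi}, with the case $t \le s$ handled symmetrically. Your extra check that $a\phi \boxplus d$ indeed lies in $C$ is a small point the paper leaves implicit, but it does not change the argument.
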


\begin{proof}
  We will prove only the first equality,  the proof of the other
  being analogous. 

  Note first that the  right-hand side of~\eqref{eq:upsi} is contained
  in $U_{\psi}$, since for $a \in A$ and $d \in D$ we have
  \begin{equation*}
    (a \boxplus B) \psi = a\varphi \boxplus D =  a\varphi\boxplus d
    \boxplus D, 
  \end{equation*}
  that is, $(a, a\varphi\boxplus d) \in U_{\psi}$. 

  We now prove that $U_{\psi}$ is contained in the right-hand side
  of~\eqref{eq:upsi}. If $(a, c) \in U_{\psi}$ we have,
  using~\eqref{eq:psivsphi} 
  \begin{equation*}
    a \phi \boxplus D
    =
    (a \boxplus B) \psi
    =
    c \boxplus D,
  \end{equation*}
  so that $c = a \phi \boxplus d$ for some $d \in D$.
\end{proof}

We now show that no subgroup $U$ of $\Z_{2^{n}}\times \Z_{2^{n}}$ is a
block. By Lemma \ref{rem1}, we have to prove the following
\begin{lemma}\label{lemma:forwreath}
  There is no nontrivial, proper $\boxplus$-subgroup $U$ of $V$, and
  $(v_{1}, v_{2}) \in V$ such
  that
  \begin{equation}\label{eq:sigmacoset}
    U \Sigma = U \boxplus (v_{1}, v_{2}).
  \end{equation}
\end{lemma}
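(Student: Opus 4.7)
The plan is to first reduce $U\Sigma = U \boxplus (v_{1},v_{2})$ to the stronger statement $U\Sigma = U$, then apply Goursat together with $\Sigma$-invariance to get strong structural constraints on $U$, and finally rule out every nontrivial proper $U$ via a rotation--bricklayer argument and a parity obstruction on the unique $\boxplus$-involution. Since $(0,0) = (0,0)\Sigma \in U\Sigma = U \boxplus (v_{1},v_{2})$, the coset $U \boxplus (v_{1},v_{2})$ contains $(0,0)$, forcing $(v_{1},v_{2}) \in U$ and hence $U\Sigma = U$. Comparing first projections, $\pi_{1}(U) = \pi_{1}(U\Sigma) = \pi_{2}(U) =: A = \Span{2^{s}}$; comparing kernels (using $(x,0) \in U \iff (0,x) = (x,0)\Sigma \in U$) gives $\{x : (x,0) \in U\} = \{x : (0,x) \in U\} =: B = \Span{2^{s'}}$, with $s \le s'$. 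Lemma~\ref{lemma:psiforphi}, in the equal-projections case, then gives $U = \Set{(a, za \boxplus b) : a \in A,\ b \in B}$ for some odd integer $z$. Testing invariance on $(0,b) \in U$ forces $(b, bS) \in U$, hence $bS \in B$, so $BS = B$; testing invariance on $(a, za) \in U$ forces $(za, a + (za)S) \in U$, whose second coordinate must lie in $A$, and since $A = \Span{2^{s}}$ is simultaneously the $\F_{2}$-subspace of $V^{0}$ of vectors whose first $s$ bits vanish, this yields $(za) S \in A$ and hence $AS = A$.

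The main step, and the technical heart of the argument, is to show that no proper, nontrivial $\boxplus$-subgroup $W = \Span{2^{s}}$ of $V^{0}$ can satisfy $WS = W$. Rewriting as $W\gamma = W R^{-1}$ and identifying $W$ with the $\F_{2}$-subspace whose first $s$ coordinates vanish, $W R^{-1}$ is the subspace whose forced-zero positions form the cyclic interval $[n - r, n - r + s - 1] \pmod n$. I would split on whether $s$ is a multiple of $m$. (i) If $s = jm$ with $0 < j < \delta$, then $W$ is a union of bricks and $W\gamma = \{\gamma_{1}(0)\} \times \cdots \times \{\gamma_{j}(0)\} \times V^{0}_{j+1} \times \cdots \times V^{0}_{\delta}$; since $W R^{-1}$ is a linear subspace, so must $W\gamma$ be, which forces $\gamma_{l}(0) = 0$ for $l \le j$ and hence $W\gamma = W$; then $W = W R^{-1}$ requires the two cyclic intervals $[0, s-1]$ and $[n-r, n-r+s-1] \pmod n$ to coincide, which happens only for $r \equiv 0 \pmod n$, contradicting $m \le r \le (\delta-1)m$. (ii) If $s = jm + t$ with $0 < t < m$, then $W\gamma$ (whenever it is a subspace) must force coordinates $0, \ldots, jm - 1$ to zero and is free on bricks $j + 2, \ldots, \delta$; a direct wrap/no-wrap analysis of the cyclic interval $[n - r, n - r + s - 1] \pmod n$ using $r \ge m$ and $s < n$ then exhibits an explicit index $i$ for which $e_{i}$ belongs to exactly one of $W\gamma$ and $W R^{-1}$ (taking $i \in [0, jm-1]$ outside the cyclic interval when $j \ge 1$, and $i = n-r$ when $j = 0$). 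I expect the bookkeeping in (ii) to be the main source of calculation.

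With the claim established, $A, B \in \{0, V^{0}\}$ and $B \subseteq A$ leave only the possibilities $(0,0)$ (trivial), $(V^{0}, V^{0})$ (non-proper $U = V$), and $(V^{0}, 0)$, in which case $U = \Set{(a, za) : a \in V^{0}}$ with $z$ odd and invariance reduces to $a + (za)S = z^{2} a$ in $V^{0}$ for every $a$. Setting $a = 0$ yields $S(0) = 0$; setting $a = 2^{n-1}$, the unique $\boxplus$-involution, and using that $z$ and $z^{2}$ are both odd so that $z \cdot 2^{n-1} \equiv 2^{n-1} \equiv z^{2} \cdot 2^{n-1} \pmod{2^{n}}$, reduces the equation to $2^{n-1} + S(2^{n-1}) = 2^{n-1}$, i.e.\ $S(2^{n-1}) = 0 = S(0)$, contradicting the bijectivity of $S$. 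Hence no such $U$ exists.
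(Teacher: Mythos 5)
There is a genuine gap, and it occurs in your very first step. You assert $(0,0)\Sigma=(0,0)$, but by~\eqref{eq:Sigma-acts} we have $(0,0)\Sigma=(0,0S)$, and $0S=0\gamma R$ need not be $0$: the only hypothesis on the S-boxes is that they are bijective, so $0\gamma$ can be any element of $V^{0}$. Consequently you cannot conclude $(v_{1},v_{2})\in U$ and reduce~\eqref{eq:sigmacoset} to $U\Sigma=U$; all one gets is that the coset containing $(0,0)\Sigma$ is $U\boxplus(0,0S)$, which is what the paper works with. The error propagates through the rest of the argument: your kernel comparison uses $(x,0)\Sigma=(0,x)$, which is again false (it equals $(0,x+0S)$, and $(x+0S)\boxminus 0S\neq x$ in general, so the identification of the two ``kernels'' does not follow); your derivation of $S(0)=0$ in the final case is a symptom of the same false premise (the correct equation $a+(za)S=z^{2}a\boxplus 0S$ is a tautology at $a=0$); and, most seriously, the ``technical heart'' of your plan proves that $WS=W$ is impossible, whereas the statement actually needed is that $DS=0S\boxplus D$ is impossible for a proper nontrivial subgroup $D$ --- an equality with a \emph{coset}. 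Your case~(i) argument (``$WR^{-1}$ is a linear subspace, hence so is $W\gamma$, forcing $0\gamma_{l}=0$'') collapses in the coset version, since $0S\boxplus D$ need not contain $0$ and need not be a subspace.

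The other ingredients of your plan are sound and essentially those of the paper: Goursat's Lemma to put $U$ in the form $\{(a,a\phi\boxplus d)\}$, the observation that $\phi$ must be an automorphism, the reduction of the second Goursat case to the first, and the use of the unique involution $2^{n-1}$ (fixed by every automorphism, and satisfying $y+2^{n-1}=y\boxplus 2^{n-1}$) to extract $2^{n-1}S=0S$ in the diagonal case. But to repair the main step you need an invariant that is stable under translation by an arbitrary vector, both for $+$ and for $\boxplus$, so that the nuisance constant $0S$ can be absorbed; this is exactly what the paper's notion of \emph{type} (Lemmas~\ref{lemma:subset}, \ref{lemma:Wform}, \ref{lemma:Wgammaform} and~\ref{lemma:notWform}) provides, by showing that $D$, $D\gamma$ and $0S\boxplus D$ all share a type while $DS=D\gamma R$ does not.
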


\begin{proof}
  By Theorem~\ref{theorem:Goursat} and 
  Lemma~\ref{lemma:psiforphi},
  there is $\phi \in \End(\Z_ {2^{n}})$ such that 
  \begin{equation}\label{eq:U}
    U = \Set{
      (a, a \phi \boxplus d)
      :
      a \in A, d \in D
    }
  \end{equation}
  for some  $A \le
  \Z_ {2^{n}}$ and $D \le A \phi$, or
  \begin{equation}\label{eq:U2}
    U = \Set{
      (c \phi \boxplus b,c)
      :
      c \in C, b \in B
    }
  \end{equation}
  for some  $C \le
  \Z_ {2^{n}}$ and $B \le C \phi$.

  Suppose first that $U$ satisfies \eqref{eq:sigmacoset} and \eqref{eq:U}.
  By the definition~\eqref{eq:Sigma}~and \eqref{eq:Sigma-acts} of
  $\Sigma$, we have 
  \begin{equation*}
    (a, a \phi \boxplus d) \Sigma
    =
    (a \phi \boxplus d, a + (a \phi \boxplus d) S).
  \end{equation*}
  Setting $a = d  = 0$, we see that $(0,0)\Sigma=(0,0S)$  so that we can
  take $v_{1} = 0$  and $v_{2} = 0 S$. We have thus  that for any $a \in
  A, d \in D$, there are $x \in A, y \in D$ such that
  \begin{equation}\label{eq:sigmadoes}
    (a \phi \boxplus d, a + (a \phi \boxplus d) S)
    =
    (x, x \phi \boxplus y \boxplus 0 S)\in U\boxplus (0,0S),
  \end{equation}
  that is, $x=a\varphi\boxplus d$ and $y\boxplus 0S=a+(a\varphi\boxplus
  d)S\boxminus (a\varphi\boxplus d)\varphi$, and so 
  \begin{equation}\label{eq:keyequality}
    a + (a \phi \boxplus d) S
    \boxminus
    (a \phi \boxplus d) \phi 
    \in
    0 S \boxplus D.
  \end{equation}
  Note that in the equation  $x=a\varphi\boxplus d$, $a$ and $x$ range
  in $A$ while $d$ ranges in $D$. Since $D \le A \phi$, we obtain that
  $A \phi  = A$,  and so  $s = t$,  and $\phi$  is an  automorphism of
  $\Z_{2^{n}}$.

  Setting $a = 0$ in~\eqref{eq:keyequality}, we see that $D S \subseteq
  0 S \boxplus D$. Since $S$ is bijective, we have $\Size{ D S } =
  \Size{D} =
  \Size{ 0 S \boxplus D}$, so that
  \begin{equation}\label{eq:DSequalsetc}
    D S = 0 S \boxplus D.
  \end{equation}

  When $D = \Z_{2^{n}}$, since $\phi$ is an automorphism of
  $\Z_{2^{n}}$, we have also $C = B = A = \Z_{2^{n}}$ in
  Theorem~\ref{theorem:Goursat}, so that $U = V$, a trivial block.

  In Subsection~\ref{subsec:DS} (see Corollary~\ref{cor:Wnotcoset}) we
  will show that for $D < \Z_{2^{n}}$, the
  identity~\eqref{eq:DSequalsetc} can only hold when $D = 
  \Set{0}$. Then in Subsection~\ref{subsec:diagonal} we deal with the
  case $D = \Set{0}$, that is,
  \begin{equation*}
    U = \Set{
      (a, a \phi)
      :
      a \in A
    }.
  \end{equation*}

  It remains to deal with case \eqref{eq:U2}. Recalling that $U \Sigma =
  U \boxplus (0, 0S)$, we argue as in the first case and deduce that for
  $c \in C, b \in B$, there are $x \in C, y \in B$ such that
  \begin{equation*}
    (c, (c \phi \boxplus b) + c S)
    =
    (x \phi \boxplus y, x  \boxplus 0 S).
  \end{equation*}
  Setting  $y =  0$, we  obtain  $C =  C \phi  $,  and so  $\phi$ is  an
  automorphism.  But  then, by  \eqref{eq:psivsphi2}, we  have $\Size{B} = 
  \Size{D}$ and  so $A=C$ and  $B=D$.  Setting $a  = c \phi  \boxplus b$
  in~\eqref{eq:U2}, we obtain
  \begin{align*}
    U 
    &= 
    \Set{
      (a, (a \boxminus b) \phi^{-1})
      :
      a \in A, b \in B
    }
    \\&=
    \Set{
      (a , a \phi^{-1} \boxminus b \phi^{-1})
      :
      a \in A, b \in B
    }
    \\&=
    \Set{
      (a , a \phi^{-1} \boxplus d)
      :
      a \in A, d \in D
    },
  \end{align*}
  so that we have reduced to the previous case.
\end{proof}

\subsection{The case $D S = 0 S \boxplus D$, with $D\ne\Set{0}$}
\label{subsec:DS}

For $v  \in \F_{2}^{n}$, we denote  by $v_{[h,k]}$ the string  of bits
consisting of the bits of $v$ from the  $h$-th bit to the $k$-th
bit. (We start counting from $0$.) For
example   if  $v=(0,1,1,0)$,   then   $v_{[1,3]}=(1,1,0)$.   For   any
$W\subseteq \F_{2}^{n}$ we denote  by $W_{[h,k]}$ the set $\{v_{[h,k]}
: v\in W\}$. 

According to Lemma~\ref{lemma:subgroups}, a subgroup  $D$ of
$\mathbb{Z}_{2^{n}}$  is of the form $\Span{2^{q}}$, for some $0 \le q <
  n$.  So the representation of  each element of
$D = \Span{2^{q}}$  as  an  element  of
$\F_{2}^{n} = \F_{2}^{q} \times \F_{2}^{n-q}$  is  of  the  form 
$0_{[0,q-1]} \mid\mid      d_{[q,n-1]}$       with      $d_{[q,n-1]}\in
\F_2^{n-q}$. Recall that $\F_{2}^{n}=\F_{2}^{m} \mid\mid \cdots
\mid\mid \F_{2}^{m}$. 

We   shall   use  the   following   compact
notation:
\begin{enumerate}
\item 
\psscalebox{1.0 1.0} 
{
\begin{pspicture}(0,0)(5.2,0.6)
\psframe[linecolor=black, linewidth=0.04, dimen=outer](5.2,0.6)(3.6,-0.6)
\rput[bl](0.0,0.0){A white box}
\end{pspicture}
}\\[2mm]

\noindent denotes a subset of $\F_{2}^{m}$ of cardinality 1;\\

\item 

  \psscalebox{1.0 1.0} 
{
\begin{pspicture}(0,0)(5.2,0.6)
\psframe[linecolor=black, linewidth=0.04, fillstyle=vlines, hatchwidth=0.02, hatchangle=0.0, hatchsep=0.0612, dimen=outer](5.2,0.6)(3.6,-0.6)
\rput[bl](0.0,0.0){a ruled box}
\end{pspicture}
}\\[2mm]

\noindent denotes a subset of $\F_{2}^{m}$ of cardinality $1<t<2^{m}$;\\

\item 

\psscalebox{1.0 1.0}
{
\begin{pspicture}(0,0)(5.2,0.6)
\definecolor{colour0}{rgb}{0.2,0.2,0.2}
\psframe[linecolor=black, linewidth=0.04, fillstyle=solid,fillcolor=black, dimen=outer](5.2,0.6)(3.6,-0.6)
\rput[bl](0.0,0.0){a black box}
\end{pspicture}
}\\[2mm]

\noindent denotes the full set $\F_{2}^{m}$.
\end{enumerate}
We will say that a box has white, ruled or black \emph{type}.

We will also speak of
\begin{enumerate}
\item[(4)] 

     \begin{pspicture}(-0.15,0)(5.2,0.6)
       \pspolygon(3.6,-0.6)(3.6,0.6)(5.2,0.6)(5.2,-0.6)(3.6,-0.6)
       \rput(4.4,0){\textbf{\large ?}}
     \rput[bl](0.0,0.0){a riddle box}
     \end{pspicture}\\[2mm]

\noindent which is any of the above.
\end{enumerate}

\begin{definition}\label{def:fortype}
  Let $D$ be a subset of
  \begin{equation*}
    \F_{2}^{n} 
    =
    V_{1} \times V_{2} \times \dots \times V_{\delta},
  \end{equation*}
  where each subspace $V_{i}$ has dimension $m$.
  We shall say that $D$ \emph{has a
    type} if 
  \begin{equation*}
    D 
    =
    (D \cap V_{1}) \times (D \cap V_{2}) \times 
    \dots 
    \times (D \cap V_{\delta}). 
  \end{equation*}
  If $D$  has a  type, the \emph{type}  of $D$ will  be a  sequence of
  $\delta$  white,  ruled  or  black   boxes,  where  the  $i$-th  box
  represents the set $D \cap V_{i}$.
\end{definition}

\begin{remark}\label{rem:typesofsubgroups}
  A subgroup $D = \Span{2^{q}}$ of
  $\mathbb{Z}_{2^n}$ has one of the following two types.

  \begin{enumerate}
  \item When $q \equiv 0 \pmod{m}$, the subgroup has
    type:

  \psscalebox{1.0 1.0} 
             {
               \begin{pspicture}(0,-0.5)(12.8,1.5)
                 \psframe[linecolor=black, linewidth=0.04, fillstyle=solid, dimen=outer](5.2,0.6)(3.6,-0.6)
                 \psframe[linecolor=black, linewidth=0.04, fillstyle=solid,fillcolor=black, dimen=outer](12.8,0.6)(11.2,-0.6)
                 \psframe[linecolor=black, linewidth=0.04, fillstyle=solid,fillcolor=black, dimen=outer](9.2,0.6)(7.6,-0.6)
                 \psframe[linecolor=black, linewidth=0.04, fillstyle=solid,fillcolor=black, dimen=outer](7.2,0.6)(5.6,-0.6)
                 \psframe[linecolor=black, linewidth=0.04, fillstyle=solid, dimen=outer](1.6,0.6)(0.0,-0.6)
                 \psdots[linecolor=black, dotsize=0.06531812](2.18,0.0)
                 \psdots[linecolor=black, dotsize=0.06531812](2.18,0.02)
                 \psdots[linecolor=black, dotsize=0.06531812](2.98,0.02)
                 \psdots[linecolor=black, dotsize=0.06531812](2.98,0.02)
                 \psdots[linecolor=black, dotsize=0.06531812](2.58,0.02)
                 \psdots[linecolor=black, dotsize=0.06531812](2.58,0.02)
                 \psdots[linecolor=black, dotsize=0.066](9.78,0.02)
                 \psdots[linecolor=black, dotsize=0.066](10.18,0.02)
                 \psdots[linecolor=black, dotsize=0.06](10.58,0.02)
                 \rput[bl](5.54,1.14){$q$}
                 \psline[linecolor=black, linewidth=0.04, arrowsize=0.05291666666666667cm 2.0,arrowlength=1.4,arrowinset=0.0]{->}(5.61,1.09)(5.61,0.62)
               \end{pspicture}
             }\\[2mm]

\noindent Here there are no ruled  boxes, and the $q$-th bit occurs as
the first bit of a black box.  Note that there are no white boxes when
$q = 0$  (the subgroup is the full group  $\Z_{2^{n}}$), and there are
no black boxes when $q = 2^{n}$ (the subgroup is $\Set{0}$).

\item When $q \not\equiv 0 \pmod{m}$, there is 
  a ruled box:
  
  \psscalebox{1.0 1.0} 
             {
               \begin{pspicture}(0,-1)(12.8,1.2)
                 \definecolor{colour0}{rgb}{0.2,0.2,0.2}
                 \psframe[linecolor=black, linewidth=0.04, fillstyle=solid, dimen=outer](5.2,0.22)(3.6,-0.98)
                 \psframe[linecolor=black, linewidth=0.04, fillstyle=solid,fillcolor=black, dimen=outer](12.8,0.22)(11.2,-0.98)
                 \psframe[linecolor=black, linewidth=0.04, fillstyle=solid,fillcolor=black, dimen=outer](9.2,0.22)(7.6,-0.98)
                 \psframe[linecolor=black, linewidth=0.04, fillstyle=vlines, hatchwidth=0.02, hatchangle=0.0, hatchsep=0.0612, dimen=outer](7.2,0.22)(5.6,-0.98)
                 \psframe[linecolor=black, linewidth=0.04, fillstyle=solid, dimen=outer](1.6,0.22)(0.0,-0.98)
                 \psdots[linecolor=black, dotsize=0.06531812](2.18,-0.38)
                 \psdots[linecolor=black, dotsize=0.06531812](2.18,-0.36)
                 \psdots[linecolor=black, dotsize=0.06531812](2.98,-0.36)
                 \psdots[linecolor=black, dotsize=0.06531812](2.98,-0.36)
                 \psdots[linecolor=black, dotsize=0.06531812](2.58,-0.36)
                 \psdots[linecolor=black, dotsize=0.06531812](2.58,-0.36)
                 \psdots[linecolor=black, dotsize=0.066](9.78,-0.36)
                 \psdots[linecolor=black, dotsize=0.066](10.18,-0.36)
                 \psdots[linecolor=black, dotsize=0.06](10.58,-0.36)
                 \rput[bl](6.14,0.76){$q$}
                 \psline[linecolor=black, linewidth=0.04, arrowsize=0.05291666666666667cm 2.0,arrowlength=1.4,arrowinset=0.0]{->}(6.21,0.71)(6.21,0.24)
               \end{pspicture}
             }\\[2mm]
             where the $q$-th bit is inside the ruled box. 
  \end{enumerate}
\end{remark}

\begin{definition}
  A subgroup of $\Z_{2^n}$ of the first  type of
  Remark~\ref{rem:typesofsubgroups} will be called a \emph{whole}
  subgroup.
\end{definition}

In the next Lemma we consider the behaviour of the bitwise sum with
respect to types. 
\begin{lemma}\label{lemma:subset}
  If  $D$  is  a  subset  of $\Z_{2^{n}}$  having  a  type  and
  $v \in \Z_{2^{n}}$, then $D$ and $v+D$ have the same type.
\end{lemma}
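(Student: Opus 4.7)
My plan is to exploit the fact that the bitwise sum $+$ is exactly the one operation on $\Z_{2^n}$ that respects the brick decomposition of $\F_2^n = V_1 \times \cdots \times V_\delta$. Unlike $\boxplus$, there are no carries to propagate between bricks, so $+$ acts componentwise with respect to the decomposition.

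Concretely, I would write $v = (v_1, \ldots, v_\delta)$ with $v_j \in V_j$, and observe that for any $d = (d_1, \ldots, d_\delta) \in \F_2^n$ one has $v + d = (v_1 + d_1, \ldots, v_\delta + d_\delta)$. Suppose $D$ has a type; then by Definition~\ref{def:fortype},
\[
D = D_1 \times D_2 \times \cdots \times D_\delta,
\]
where $D_j := D \cap V_j$ (identified with a subset of the $j$-th brick). From the componentwise action of $+$ it follows immediately that
\[
v + D = (v_1 + D_1) \times (v_2 + D_2) \times \cdots \times (v_\delta + D_\delta),
\]
which shows that $v + D$ also has a type, with $(v+D) \cap V_j = v_j + D_j$ in each brick.

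It remains to check that the type of each brick is preserved, i.e.\ that $v_j + D_j$ and $D_j$ are boxes of the same color. This is a cardinality argument: translation by $v_j$ is a bijection on $\F_2^m$, so $\Size{v_j + D_j} = \Size{D_j}$, and the trichotomy white/ruled/black depends only on whether this cardinality is $1$, strictly between $1$ and $2^m$, or equal to $2^m$.

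There is no real obstacle here: the lemma is essentially a restatement of the compatibility of bitwise XOR with the brick decomposition. The only thing worth emphasising is the contrast with $\boxplus$, for which carries would mix adjacent bricks and the analogous statement would fail; this is precisely why the lemma needs to be stated separately for $+$ and will later be used to compare $DS$ with $0S \boxplus D$ in~\eqref{eq:DSequalsetc}.
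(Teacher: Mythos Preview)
Your proof is correct and follows essentially the same approach as the paper: decompose $D$ and $v$ brickwise, observe that bitwise $+$ acts componentwise so $v+D = (v_1+D_1)\times\cdots\times(v_\delta+D_\delta)$, and conclude by the cardinality-preserving bijection $d_j \mapsto v_j + d_j$ on each brick. The additional remarks about the contrast with $\boxplus$ are accurate context but not part of the argument itself.
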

\begin{proof}
  Since $D$ has a type, $D=D_{1}\times\cdots \times D_{\delta}$, where
  $D_{i}=D\cap  V_{i}$  for each  $i\in\{1,\ldots,\delta\}$.   Writing
  $v=(v_{1},\cdots, v_{\delta})$, clearly we have
  \begin{equation*}
    D+v=(D_{1}+v_{1})\times\cdots\times(D_{\delta}+v_{\delta})
  \end{equation*}   
  and so $D+v$ has a  type. Since $\Size{D_{i}} = \Size{D_{i}+v_{i}}$,
  the two types coincide.
\end{proof}

The behaviour of the modular sum $\boxplus$ with respect to types is
more complex and can be described easily only for subgroups, as in the
following lemma. 
\begin{lemma}\label{lemma:Wform}
  If  $D$  is  a  subgroup  of $\mathbb{Z}_{2^n}$ and
  $v\in\mathbb{Z}_{2^n}$,  then $D$  and 
  $v\boxplus D$ have the same type.
\end{lemma}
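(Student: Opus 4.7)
By Lemma~\ref{lemma:subgroups}, $D = \Span{2^{q}}$ for some $0 \le q \le n$, so I would compute the coset $v \boxplus D$ explicitly as a subset of $\F_{2}^{n}$ and compare its brick decomposition with the type of $D$ recorded in Remark~\ref{rem:typesofsubgroups}.

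The coset description to establish is
\[
  v \boxplus D = \Set{ w \in \Z_{2^{n}} : w_{[0,q-1]} = v_{[0,q-1]} },
\]
i.e., $v \boxplus D$ is the set of elements of $\Z_{2^{n}}$ whose low $q$ bits coincide with those of $v$, with the remaining $n-q$ bits arbitrary. This reduces to the chain: $w \in v \boxplus D$ iff $w \boxminus v \in D$ iff $w \boxminus v$ is a multiple of $2^{q}$ iff $w \equiv v \pmod{2^{q}}$ iff the low $q$ bits of $w$ and $v$ agree. The only genuinely nontrivial step here is the final equivalence, which rests on the fact that the low $q$ bits of a modular difference $w \boxminus v$ depend only on the low $q$ bits of the operands; this in turn follows because reduction modulo $2^{q}$ factors through reduction modulo $2^{n}$, so carries or borrows from positions $\ge q$ cannot affect bits below position $q$.

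Writing $q = jm + b$ with $0 \le b < m$, I would then read off the brick decomposition of $v \boxplus D$. Bricks $V_{1}^{0}, \dots, V_{j}^{0}$ sit entirely in the fixed low-bit region and each contributes a singleton (white boxes). If $b = 0$, bricks $V_{j+1}^{0}, \dots, V_{\delta}^{0}$ sit entirely in the free high-bit region and contribute full bricks (black boxes), matching part~(1) of Remark~\ref{rem:typesofsubgroups}. If $b > 0$, then brick $V_{j+1}^{0}$ straddles position $q$ and contributes those $m$-bit strings whose first $b$ bits agree with $v_{j+1}$ and whose remaining $m-b$ bits are free --- a ruled box of cardinality $2^{m-b}$, strictly between $1$ and $2^{m}$ --- while bricks $V_{j+2}^{0}, \dots, V_{\delta}^{0}$ remain full (black), matching part~(2).

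In either case $v \boxplus D$ factors as a direct product over the bricks, so it has a type, and that type coincides with the type of $D$. The main (minor) obstacle is the carry-propagation argument in the second paragraph; everything after it is straightforward bookkeeping against Remark~\ref{rem:typesofsubgroups}.
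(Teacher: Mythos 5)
Your proof is correct and follows essentially the same route as the paper: both arguments reduce to the observation that adding an element whose low $q$ bits vanish (equivalently, a multiple of $2^{q}$) leaves the low $q$ bits unchanged and lets the high $n-q$ bits range freely, so that $v \boxplus D = \Set{v_{[0,q-1]} \mid\mid w : w \in \F_{2}^{n-q}}$. Your explicit bookkeeping of which bricks come out white, ruled, or black is only slightly more detailed than what the paper leaves implicit.
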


\begin{proof}
  The binary representation  of an  element $d$ of  $D =
  \Span{2^{q}}$  has the  form 
  \begin{equation*}
      d = 0_{[0,q-1]} \mid\mid  d_{[q,n-1]},
  \end{equation*}
  where $0_{[0,q-1]}$  is a zero vector of length $q$.
  Write $v = v_{[0,q-1]} \mid\mid v_{[q,n-1]}$. Then an
  element $v\boxplus d$ of $v\boxplus D$ can be written as
  \begin{align*}
    v \boxplus d
    &=
    (v_{[0,q-1]} \mid\mid v_{[q,n-1]}) 
    \boxplus
    (0_{[0,q-1]} \mid\mid d_{[q,n-1]})
    \\&=
    (v_{[0,q-1]} \boxplus 0_{[0,q-1]}) \mid\mid (v_{[q,n-1]} \boxplus d_{[q,n-1]})
    \\&=
    v_{[0,q-1]} \mid\mid (v_{[q,n-1]}\boxplus d_{[q,n-1]}).
  \end{align*}
  As $d_{[q,n-1]}$ ranges in $\F_{2}^{n-q}$, so does
  $v_{[t,n-1]}\boxplus d_{[q,n-1]}$. Therefore $D$ and $v \boxplus D$
  have the same type.
\end{proof}

Clearly a bricklayer transformations  will map  any set having a
type to another set having the same type, since each S-box is
a bijection.

\begin{lemma}\label{lemma:Wgammaform}
  If $D$ is a subgroup  of $\mathbb{Z}_{2^n}$, then $D$, $D\gamma$ and
  $0\gamma\boxplus  D$   have  the  same  type,   for  any  bricklayer
  transformation $\gamma \in \Sym(V)$.  

  Moreover, if $D$ is
  whole, then $D\gamma=0\gamma\boxplus  D$.
\end{lemma}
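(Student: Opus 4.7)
The plan is to treat the three assertions separately, with the first two (same type) following from results already established, and the third (equality when $D$ is whole) by a direct structural comparison.

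First I would invoke Lemma~\ref{lemma:subgroups} to write $D=\Span{2^{q}}$ for some $0\le q\le n$, and recall from Remark~\ref{rem:typesofsubgroups} that $D$ always has a type. Since $\gamma$ acts on $V$ by permuting each brick $V_{i}$ independently via its S-box $\gamma_{i}$, and since $D$ decomposes as $D=(D\cap V_{1})\times\dots\times(D\cap V_{\delta})$, I get
\begin{equation*}
  D\gamma
  =
  (D\cap V_{1})\gamma_{1}\times\dots\times(D\cap V_{\delta})\gamma_{\delta}.
\end{equation*}
Each $\gamma_{i}$ is a bijection of $V_{i}$, so $\Size{(D\cap V_{i})\gamma_{i}}=\Size{D\cap V_{i}}$. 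Hence $D\gamma$ has a type, and it is the same type as $D$ since the type is determined by the cardinalities of the brick components. The fact that $0\gamma\boxplus D$ has the same type as $D$ is exactly Lemma~\ref{lemma:Wform}.

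For the moreover part, assume $D$ is whole, so $q=jm$ for some $0\le j\le \delta$. Then elements of $D$ are exactly those whose first $j$ bricks are zero, that is
\begin{equation*}
  D = \Set{0}\times\dots\times\Set{0}\times V_{j+1}\times\dots\times V_{\delta}.
\end{equation*}
Applying $\gamma$ brickwise and using $V_{i}\gamma_{i}=V_{i}$ for $i>j$, I obtain
\begin{equation*}
  D\gamma
  =
  \Set{0\gamma_{1}}\times\dots\times\Set{0\gamma_{j}}\times V_{j+1}\times\dots\times V_{\delta}.
\end{equation*}

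Finally, to compute $0\gamma\boxplus D$, I would exploit that $D$ is a $\boxplus$-subgroup: the coset $0\gamma\boxplus D$ consists of precisely those $v\in\Z_{2^{n}}$ that agree with $0\gamma$ on the low $jm$ bits, i.e.\ on the first $j$ bricks. Since $0\gamma=(0\gamma_{1},\dots,0\gamma_{\delta})$, this gives the same explicit product description as for $D\gamma$, proving equality. The only step that requires a little care is this last one, verifying that cosets of a whole subgroup are exactly the sets with prescribed low bricks; but this is immediate from the characterization of $D=\Span{2^{jm}}$ as the kernel of reduction modulo $2^{jm}$, so there is no real obstacle.
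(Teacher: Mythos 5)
Your proof is correct and follows essentially the same route as the paper: the first claim comes from the observation that a bricklayer transformation preserves types (S-boxes being bijections) together with Lemma~\ref{lemma:Wform}, and the ``moreover'' part is the same brickwise computation of $D\gamma$ for a whole subgroup, compared with the description of the coset $0\gamma\boxplus D$ as the elements agreeing with $0\gamma$ on the low $jm$ bits.
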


\begin{proof}
  Clearly   $D$   and   $D\gamma$   share  the   same   type   and   by
  Lemma~\ref{lemma:Wform}, this is the same type as $0\gamma\boxplus
  D$.
  
  If $D$
  is a whole subgroup, then
  \begin{equation*}
    D = 0_{[0,m-1]} \mid\mid \cdots  \mid\mid 
    0_{[(l-1)m,lm-1]}
    \mid\mid D_{l} \mid\mid \cdots  \mid\mid  
    D_{\delta}
  \end{equation*}
  for some $l\leq \delta$, and thus 
  \begin{equation*}
    D\gamma=0\gamma_{1} \mid\mid \cdots
     \mid\mid 0\gamma_{l-1} \mid\mid D_{l}\gamma_{l} 
     \mid\mid \cdots  \mid\mid 
    D_{\delta}\gamma_{\delta}.
  \end{equation*}
  Since $D_{i}=\F_{2^{m}}$ for any $i\in\{l,\ldots,\delta\}$,
  $D\gamma=0\gamma\boxplus D$.
\end{proof}

\begin{lemma}\label{lemma:notWform}
  If $D$ is a proper, nontrivial subgroup of $\mathbb{Z}_{2^n}$, then
  $DS$ and $D$ have   different types.
\end{lemma}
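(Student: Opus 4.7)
The plan is to use Lemma~\ref{lemma:Wgammaform} to reduce the claim to showing that $D\gamma R$ has a type different from $D$, or indeed fails to admit a type at all. Writing $q = lm + s$ and $r = km + s'$ with $0 \leq s, s' < m$, the hypothesis $m \leq r \leq (\delta-1)m$ forces $1 \leq k$, with $k \leq \delta - 1$ always and $k \leq \delta - 2$ when $s' > 0$. By Remark~\ref{rem:typesofsubgroups}, the type of $D$ (and hence of $D\gamma$) consists of $l$ white boxes, followed by one ruled box when $s > 0$, and then enough black boxes to reach $\delta$ positions.

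If $s' = 0$, then $R$ simply cyclically permutes the bricks by $k$ positions, so $D\gamma R$ has a type obtained from the type of $D\gamma$ by a $k$-step cyclic shift. Since $D\gamma$'s type consists of a contiguous white block followed by (possibly a ruled box and then) a contiguous black block, no non-trivial cyclic shift preserves it for $1 \leq k \leq \delta - 1$; hence the types differ.

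The delicate case is $s' > 0$, where $R$ splits bricks: each new brick $i'$ inherits its first $s'$ bits from the last $s'$ bits of old brick $i' - k - 1$, and its last $m - s'$ bits from the first $m - s'$ bits of old brick $i' - k$ (all indices modulo $\delta$). When $D$ is whole ($s = 0$), each old brick of $D\gamma$ is either entirely fixed or entirely free with independent bits, so $D\gamma R$ factors over new bricks; a new brick is white (resp.\ black) when both contributing old bricks are white (resp.\ black), and ruled at the two boundary positions between the white block and the black block. Thus $D\gamma R$ has exactly two ruled boxes, whereas $D$ has none. When $D$ is not whole ($s > 0$), the old ruled brick $l + 1$ carries the set $\gamma_{l+1}(H)$ of cardinality $2^{m-s}$ (where $H$ is the restriction of $D$ to $V_{l+1}$), whose $m$ bits may be correlated; after rotation these bits span two consecutive new bricks $k + l + 1$ and $k + l + 2$. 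If $\gamma_{l+1}(H)$ does not decouple as a Cartesian product $A \times B$ across this split, then $D\gamma R$ does not factor over bricks and has no type, and we are done. Otherwise $|A||B| = 2^{m-s}$ forces both $|A|$ and $|B|$ to be powers of $2$, and a short case analysis on $(|A|, |B|)$ shows that the counts of white, ruled, and black boxes in $D\gamma R$ can match $D$'s only in sub-cases where the single ruled box of $D\gamma R$ sits at a nontrivial $k$-shift of $D$'s ruled position, which the bound $1 \leq k \leq \delta - 2$ precludes.

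The main technical point will be this last sub-case: verifying that the correlations in the ruled brick either prevent $D\gamma R$ from having any type, or else force its ruled box to a position incompatible with $D$'s. The tight bound $k \leq \delta - 2$ when $s' > 0$, coming from $r \leq (\delta - 1)m$ together with $s' \geq 1$, is what closes the argument.
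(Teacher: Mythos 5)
Your strategy --- explicitly computing the type of $D\gamma R$ from the way the rotation splits bricks, then comparing box counts and positions with those of $D$ --- is genuinely different from the paper's, which never determines the type of $DS$ at all: it only compares the projection of $D\gamma$ on a single, possibly unaligned, $m$-bit window with what equality of types would force at the target brick (``all bits fixed'' against ``all bits free''), via a sliding-window argument over the admissible $r$. Your aligned case $s'=0$ and your whole case $s=0$ are correct. The gap is in the sub-case you yourself flag as the main technical point ($s>0$, $s'>0$, and $\gamma_{l+1}(H)=A\times B$ with $A$ the projection on the first $m-s'$ bits and $B$ on the last $s'$ bits), where your proposed closing mechanism is wrong. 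When $D$ has at least one white and at least one black brick ($1\le l\le\delta-2$), matching the white and black box counts forces $\Size{A}=1$ and $\Size{B}=2^{s'}$; the old ruled brick is then absorbed into one white and one black new brick, and the unique ruled box of $D\gamma R$ is \emph{not} a shift of $D$'s ruled brick but the box created at the cyclic black-to-white seam, namely the new brick drawing its bits from old bricks $\delta$ and $1$, sitting at position $k+1$. Since $D$'s ruled box is at position $l+1$, the two coincide exactly when $k=l$, which the bound $1\le k\le\delta-2$ does not exclude. So the position of the ruled box alone does not close the argument.

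The statement still holds in that configuration, but for a different reason: with $k=l$ the white boxes of $D\gamma R$ occupy positions $l+2,\dots,2l+1$ modulo $\delta$, a cyclic interval that equals $\{1,\dots,l\}$ only if $l\equiv\delta-1$, which is excluded here because $D$ would then have no black brick. You must therefore compare the positions of the white (or black) runs, not merely of the ruled box. (In the degenerate configurations where $D$ has no white bricks, $l=0$, or no black bricks, $l+1=\delta$, your ruled-position argument does work: a match would force $k\equiv 0$ or $k=\delta-1$ modulo $\delta$, both excluded.) Once this comparison is added, your case analysis goes through; but as written the proposal has a hole precisely where it announces that the bound on $k$ ``closes the argument''.
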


\begin{proof}
  By Lemma \ref{lemma:Wgammaform} we know  that $D$ and $D\gamma$ have
  the same type.  We will now prove that an  application of $R$ changes
  the type,  which will yield the claim.
  According to Remark~\ref{rem:typesofsubgroups}, we distinguish
  the following three possibilities for the type of  $D \gamma$:\\

\psscalebox{1.0 1.0} 
{
\begin{pspicture}(0,-2.6)(13.6,2.6)
\psframe[linecolor=black, linewidth=0.04, fillstyle=solid, dimen=outer](6.0,2.6)(4.4,1.4)
\psframe[linecolor=black, linewidth=0.04, fillstyle=solid,fillcolor=black, dimen=outer](13.6,2.6)(12.0,1.4)
\psframe[linecolor=black, linewidth=0.04, fillstyle=solid,fillcolor=black, dimen=outer](10.0,2.6)(8.4,1.4)
\psframe[linecolor=black, linewidth=0.04, fillstyle=solid, dimen=outer](8.0,2.6)(6.4,1.4)
\psframe[linecolor=black, linewidth=0.04, fillstyle=solid, dimen=outer](2.4,2.6)(0.8,1.4)
\psdots[linecolor=black, dotsize=0.06531812](2.98,2.0)
\psdots[linecolor=black, dotsize=0.06531812](2.98,2.02)
\psdots[linecolor=black, dotsize=0.06531812](3.78,2.02)
\psdots[linecolor=black, dotsize=0.06531812](3.78,2.02)
\psdots[linecolor=black, dotsize=0.06531812](3.38,2.02)
\psdots[linecolor=black, dotsize=0.06531812](3.38,2.02)
\psdots[linecolor=black, dotsize=0.066](10.58,2.02)
\psdots[linecolor=black, dotsize=0.066](10.98,2.02)
\psdots[linecolor=black, dotsize=0.06](11.38,2.02)
\psframe[linecolor=black, linewidth=0.04, fillstyle=solid,fillcolor=black, dimen=outer](4.4,-1.4)(2.8,-2.6)
\psframe[linecolor=black, linewidth=0.04, fillstyle=solid,fillcolor=black, dimen=outer](13.6,-1.4)(12.0,-2.6)
\psframe[linecolor=black, linewidth=0.04, fillstyle=solid,fillcolor=black, dimen=outer](11.6,-1.4)(10.0,-2.6)
\psframe[linecolor=black, linewidth=0.04, fillstyle=solid,fillcolor=black, dimen=outer](8.0,-1.4)(6.4,-2.6)
\psframe[linecolor=black, linewidth=0.04, fillstyle=vlines, hatchwidth=0.02, hatchangle=0.0, hatchsep=0.0612, dimen=outer](2.4,-1.4)(0.8,-2.6)
\psdots[linecolor=black, dotsize=0.06614429](4.98,-2.0)
\psdots[linecolor=black, dotsize=0.06614429](4.98,-1.98)
\psdots[linecolor=black, dotsize=0.06614429](5.78,-1.98)
\psdots[linecolor=black, dotsize=0.06614429](5.78,-1.98)
\psdots[linecolor=black, dotsize=0.06614429](5.38,-1.98)
\psdots[linecolor=black, dotsize=0.06614429](5.38,-1.98)
\psdots[linecolor=black, dotsize=0.066](8.58,-1.98)
\psdots[linecolor=black, dotsize=0.066](8.98,-1.98)
\psdots[linecolor=black, dotsize=0.06](9.38,-1.98)
\psframe[linecolor=black, linewidth=0.04, fillstyle=solid, dimen=outer](4.4,0.6)(2.8,-0.6)
\psframe[linecolor=black, linewidth=0.04, fillstyle=vlines, hatchwidth=0.02, hatchangle=0.0, hatchsep=0.0612, dimen=outer](13.6,0.6)(12.0,-0.6)
\psframe[linecolor=black, linewidth=0.04, fillstyle=solid, dimen=outer](11.6,0.6)(10.0,-0.6)
\psframe[linecolor=black, linewidth=0.04, fillstyle=solid, dimen=outer](8.0,0.6)(6.4,-0.6)
\psframe[linecolor=black, linewidth=0.04, fillstyle=solid, dimen=outer](2.4,0.6)(0.8,-0.6)
\psdots[linecolor=black, dotsize=0.06531812](4.98,0.0)
\psdots[linecolor=black, dotsize=0.06531812](4.98,0.02)
\psdots[linecolor=black, dotsize=0.06531812](5.78,0.02)
\psdots[linecolor=black, dotsize=0.06531812](5.78,0.02)
\psdots[linecolor=black, dotsize=0.06531812](5.38,0.02)
\psdots[linecolor=black, dotsize=0.06531812](5.38,0.02)
\psdots[linecolor=black, dotsize=0.066](8.58,0.02)
\psdots[linecolor=black, dotsize=0.066](8.98,0.02)
\psdots[linecolor=black, dotsize=0.06](9.38,0.02)
\rput[bl](0.0,1.8){a)}
\rput[bl](0.0,-0.2){b)}
\rput[bl](0.0,-2.2){c)}
\rput[b](7.2,1.8){\textbf{\large ?}}
\end{pspicture}
}\\[2mm]
As in Definition \ref{def:fortype}, we count the boxes from $1$ to
$\delta$.

\begin{itemize}
\item 
Consider first case  a), when we have both black  and white boxes, and
the riddle box can be of any type.

If $r = 2 m$, then the white box preceding the riddle box is sent by $R$
onto the black box following the riddle box, a contradiction.

Similarly, if $r = (\delta - 1) m$, the first white box is sent by $R$ onto
the last black box. For later use, we regard this as $R^{-1}$ sending
the last black box onto the first white box.

Now note first that every $m$-bit box  that is contained in the stretch of
white boxes will be  white, even if it is not aligned  with one of the
bricks $V^{i}_{j}$. This is simply because all  bits in this stretch
take a single 
value  each. Similarly,  every $m$-bit  box that  is contained  in the
stretch of black boxes  will be black, even if it  is not aligned with
one of  the bricks. This is because  all bits in this  stretch take two
values  each, independent of one another.

To deal  with the intermediate  cases $2m \le r  \le (\delta -  1) m$,
start with  the case $r =  2 m$, and shift the black box next to the
riddle box right by one bit. As just noted, this will still be black,
and for $r = 2 m + 1$, the rotation $R$ will take the white box next
to the riddle box onto the shifted black box, a contradiction. 

We keep shifting the
black box to the right one bit at a time, until we hit the rightmost
black box. In this way we will have covered all rotations $R$, for $2
m \le r \le \theta m$, where $\delta - \theta + 1$ is the position of the
riddle box.
 
To cover the remaining rotations, start with the last black box, which
for $r =  \theta m$ is taken  by the left rotation $R^{-1}$  onto the white
box adjacent to the riddle box. Shift the latter white box left by one
bit. By the  remark above, this will still be  white, and the left rotation
$R^{-1}$ by $r =  \theta m + 1$ bits will take the last black box onto it,
a contradiction.

Shifting  bit by  bit the  white box  to the  left, until  it overlaps
completely the first white box, we see  that for $2m \le r \le (\delta
- 1) m$, one or both of the following possibilities will have occurred.
\begin{enumerate}
\item The rotation $R$ sends a white box onto a black box, or
  over two adjacent black boxes. Since in a white box all bits take a
  single value, while in a black box each bit takes two values,
  independent of one another, this is a contradiction.
\item The left rotation $R^{-1}$ sends a black box onto a white box, or
  over two adjacent white boxes. This is again a contradiction.
\end{enumerate}

If $m \le r < 2 m$, then $R$ sends the last black box, either onto the
first white box, or in any case to overlap the first white box in the
last $2 m - r > 0$ bits of the latter. Once more, this is a contradiction.

\item Consider now case  b). Here we do  not have black boxes  and the ruled
box is the rightmost  one, at position $\delta$.  
Under the rotation
to the right by $r$ bits,
the ruled box is moved onto a white box, or comes to overlap two
adjacent  white boxes. This implies that all bits of the ruled box
take a single value each, so that the ruled box is a singleton, that
is, it is also white, a
contradiction. 

\item Finally, in case c)  we do not have white boxes, and  the ruled box is
the leftmost  one. Applying a rotation  to the right by  $r$ bits, the
ruled box is moved onto a black  box, or comes to overlap two adjacent
black  boxes.  Since concatenation  of  boxes  means concatenation  of
strings, and in a black box  each bit takes two values, independent of
one another, this would make the ruled box black, a contradiction.
\end{itemize}
\end{proof}

\begin{cor}\label{cor:Wnotcoset}
  If $D  \ne \Set{ 0 }$  is a  subgroup of  $\mathbb{Z}_{2^n}$, then
    $DS\ne 0S\boxplus D$.
\end{cor}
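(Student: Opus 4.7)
The plan is simply to combine Lemma~\ref{lemma:Wform} and Lemma~\ref{lemma:notWform} and compare types on the two sides of the purported equality. First I would observe that, since $0S$ is a fixed element of $\mathbb{Z}_{2^n}$ and $D$ is a subgroup, the right-hand side $0S \boxplus D$ is a $\boxplus$-coset of $D$, so by Lemma~\ref{lemma:Wform} it has exactly the same type as $D$ itself.

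Next I would invoke Lemma~\ref{lemma:notWform}, which guarantees that whenever $D$ is a proper nontrivial subgroup of $\mathbb{Z}_{2^n}$, the set $DS$ does not share the type of $D$. To conclude, suppose for contradiction that $DS = 0S \boxplus D$. Then $DS$ would inherit the type of $0S \boxplus D$, which is the type of $D$, in direct contradiction to Lemma~\ref{lemma:notWform}. Hence $DS \ne 0S \boxplus D$, as required.

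I do not expect any substantive obstacle in carrying out this step: all the heavy lifting --- the case analysis showing that no rotation of extent $m \le r \le (\delta - 1) m$ can preserve the white/ruled/black box pattern of a nontrivial proper subgroup --- has already been done inside Lemma~\ref{lemma:notWform}, and the corollary amounts to a one-line repackaging of that result. The only point worth flagging is the degenerate case $D = \mathbb{Z}_{2^n}$, where both sides of the alleged inequality coincide because $S$ is bijective; this case is, however, implicitly excluded, since in the sole application of the corollary, inside the proof of Lemma~\ref{lemma:forwreath}, the possibility $D = \mathbb{Z}_{2^n}$ has already been disposed of separately as yielding only the trivial block $U = V$.
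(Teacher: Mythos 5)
Your argument is exactly the paper's own proof, which simply cites Lemma~\ref{lemma:Wform} (so that $0S \boxplus D$ has the type of $D$) together with Lemma~\ref{lemma:notWform} (so that $DS$ does not); your additional remark that the case $D = \mathbb{Z}_{2^n}$ must be excluded, and is handled separately before the corollary is invoked in Lemma~\ref{lemma:forwreath}, is a correct and worthwhile clarification of the statement's scope.
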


\begin{proof}
  It    follows   from    Lemma    \ref{lemma:notWform}   and    Lemma
  \ref{lemma:Wform}.
\end{proof}

\subsection{The diagonal case $D = \Set{0}$}
\label{subsec:diagonal}

Here we deal with the case when a subgroup of the form
\begin{equation*}
  U = \Set{
    (a, a \phi)
    :      
    a \in A 
  },
\end{equation*}
for some $0 \ne A \le \mathbb{Z}_{2^n}$ and  $\phi \in \Aut(\Z_{2^n})$,
is a block. 
Since 
\begin{equation*}
  (a,a \phi) \Sigma = (a \phi, a + a \phi S),
\end{equation*}
as in the discussion following Lemma~\ref{lemma:forwreath} we have
\begin{equation*}
  U \Sigma = U \boxplus (0, 0S).
\end{equation*}
Therefore for each $a \in A$ there is $x \in A$ such that
\begin{equation*}
  (a \phi, a + a \phi S)
  =
  (x, x \phi \boxplus 0 S),
\end{equation*}
so that $x=a\varphi$, and substituting
\begin{equation}\label{eq:willfail}
  a \phi^{2}
  =
  (a + a \phi S)  \boxminus 0 S.  
\end{equation}
Since $\phi$ is an automorphism, we have $2^{n-1} \phi = 2^{n-1}$. Now
for any $y$ it is easy to see that   
\begin{equation*}
  y + 2^{n-1} = y \boxplus 2^{n-1},
\end{equation*}
as in both cases we are just changing the most significant bit of
$y$. Therefore, setting $a = 2^{n-1} \in A$ in~\eqref{eq:willfail}, we
obtain 
\begin{equation*}
  2^{n-1} = 2^{n-1} \boxplus 2^{n-1} S \boxminus 0 S,
\end{equation*}
or in other words
\begin{equation*}
2^{n-1} S = 0 S,
\end{equation*} 
contradicting the fact that $S$ is a bijection.

\section{O'Nan-Scott}
\label{sec:ON-S}

We have  shown in the previous  section that the subgroup  $\Gamma$ of
$\Sym(V)$ is  primitive.  We  may thus prove      Theorem
\ref{theo:sym} by appealing to   the O'Nan-Scott
classification   of   primitive   groups~\cite{LPS}.
However, since by~\eqref{eq:gamma}  $\Gamma$ contains the
group  $\mathcal{T}$ of  translations, which  is an  abelian subgroup
acting regularly on $V$, we  are    able  to  appeal   to a
particular case of the O'Nan-Scott classification, obtained by
Li~\cite[Theorem 
  1.1]{Li},  which describes the primitive  groups
containing  an abelian 
regular subgroup. In  the particular case when $\Gamma$ acts on a set
whose order is a
power of $2$, Li's result can be stated as follows.
\begin{theo}[\cite{Li}, Theorem 1.1]\label{Li}
  Let $\Gamma$ be  a primitive group  acting on a set $V$ of
  cardinality $2^{b}$, with $b > 1$. Suppose
  $\Gamma$ contains a regular abelian subgroup $T$.

  Then $\Gamma$ is one of the following.
  \begin{enumerate}
  \item Affine, $\Gamma \leq  \AGL(b,2)$.
  \item Wreath product, that is
    $$ \Gamma \cong (K_1\times\cdots\times K_l).O.P,
    $$  with  $2^{b}  =  c^{l}$  for  some $c$  and  $l  >  1$.   Here
    $T=T_1\times   \cdots  \times   T_l$,  with   $T_i\leq  K_i$   and
    $\Size{T_{i}} = c$ for each $i$, $K_1\cong\ldots\cong K_l$, $O\leq
    \Out(K_1)\times\cdots\times\Out(K_l)$,         $P$
    permutes transitively the  $K_i$, and either $K_{i}  = \Sym(c)$ or
    $K_{i} = \Alt(c)$.
  \item  Almost  simple, i.e.,  $K\leq  \Gamma\leq  \Aut(K)$ for  a
    nonabelian simple group $K$.
  \end{enumerate}
\end{theo}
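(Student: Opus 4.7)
The plan is to apply the full O'Nan--Scott classification of finite primitive permutation groups to $\Gamma$, and then use the two hypotheses---existence of a regular abelian subgroup $T$ and the degree $|V|=2^{b}$---to prune the list of types down to the three cases in the statement. Recall that for a finite primitive group with socle $N = K_{1}\times\cdots\times K_{l}$ (a direct product of pairwise isomorphic simple groups), the possible O'Nan--Scott types are HA (affine), AS (almost simple), SD/CD (simple or compound diagonal), PA (product action), and TW (twisted wreath). The affine case HA immediately yields conclusion (1): the socle is then elementary abelian of order $|V|=2^{b}$, so $\Gamma\leq\AGL(b,2)$. The almost simple case AS gives conclusion (3) directly. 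All the effort therefore lies in showing that the remaining cases either collapse to the wreath form (2) or cannot occur.

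For the remaining cases the socle is a direct product of $l>1$ isomorphic \emph{nonabelian} simple groups $K_{i}$, and $\Gamma$ permutes the factors transitively by conjugation. Here I would argue in two steps. First, since $T$ is abelian and acts regularly, its intersections with the orbits of the socle on $V$ must respect the product decomposition; combined with the transitivity of the outer action on the factors, this forces $T=T_{1}\times\cdots\times T_{l}$, with each $T_{i}\leq K_{i}$ abelian and regular on a factor set $\Delta$ such that $V\cong\Delta^{l}$, $|\Delta|=c$, and $c^{l}=2^{b}$ (so $c$ is itself a power of $2$). Second, the outer part $\Gamma/N$ must be accommodated inside $\Out(K_{1})\times\cdots\times\Out(K_{l})$, further extended by a transitive permutation group $P$ of the $l$ factors, yielding the shape in (2).

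The main obstacle---and the genuinely hard core of the theorem---is the following classification-type statement: an almost simple primitive group of $2$-power degree $c$ that contains a regular abelian subgroup must be $\Sym(c)$ or $\Alt(c)$ in its natural action on $c$ points. This conclusion requires the Classification of Finite Simple Groups, combined with case-by-case inspection---Aschbacher's theorem for the classical families, and explicit tables for the exceptional Lie-type and sporadic groups---of which faithful primitive actions of $2$-power degree admit an abelian regular subgroup. Granting this input, the SD, CD, and TW cases are ruled out because their natural degrees together with a regular abelian subgroup contradict the surviving list, while PA assembles into exactly the form (2) with each $K_{i}\in\{\Sym(c),\Alt(c)\}$. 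Everything else in the argument is bookkeeping around O'Nan--Scott; the classification input is the only step I would expect to be nontrivial.
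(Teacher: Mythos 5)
The first thing to say is that the paper does not prove this statement at all: Theorem~\ref{Li} is quoted verbatim (specialized to degree $2^{b}$) from Li's paper \cite{Li}, and is used as a black box. So there is no in-paper proof to compare yours against; what you have written is a sketch of how one might prove Li's theorem itself.

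As such a sketch, your outline points in the right general direction --- this is indeed an O'Nan--Scott analysis combined with CFSG-dependent input --- but the two places where you wave your hands are exactly where the mathematical content of Li's paper lives. First, the assertion that a regular abelian subgroup $T$ of a product-action group ``must respect the product decomposition'' and hence splits as $T = T_{1} \times \cdots \times T_{l}$ with $T_{i} \le K_{i}$ is not automatic: a priori a regular subgroup of a wreath product in product action need not project into the base group factors in this aligned way, and establishing this splitting (and that each $T_{i}$ is regular on the corresponding factor $\Delta$) is a genuine step of the argument, not bookkeeping. Second, the statement you explicitly grant --- that an almost simple primitive group of $2$-power degree containing a regular abelian subgroup has socle $\Alt(c)$ in its natural action --- is the theorem's hard core, and ``granting'' it means you have not proved the AS case or the PA case (where the same classification is needed for each component $K_{i}$). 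A correct treatment would at least invoke Guralnick's classification of simple groups with a subgroup of prime-power index (as the paper does in Subsection~\ref{subsec:almostsimple}) and then eliminate the residual $\PSL$ possibilities using the regular abelian subgroup. By contrast, the exclusion of the SD, CD and TW types is easy and you should have said why: in those types the degree is $\Size{K}^{l-1}$ or $\Size{K}^{l}$ for a nonabelian simple group $K$, whose order is divisible by at least three distinct primes, so the degree cannot be a power of $2$; your phrase ``contradict the surviving list'' substitutes an appeal to the unproved classification for this one-line argument. In short: right skeleton, but the load-bearing steps are asserted rather than proved, and for the purposes of this paper the correct move is simply to cite \cite{Li}, as the authors do.
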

Here the notation  $S.T$ denotes an extension of the  group $S$ by the
group $T$.

Case (2) is the case of the (wreath product in) \emph{product
  action}. In dealing with this, we will be supplementing Li's
statement with the information from~\cite{LPS}.

In the next three subsections we will examine the three cases of
Theorem~\ref{Li}, and show that only the almost simple case can hold,
with $\Gamma = \Alt(V)$.  

Recall that  in our case $\Size{V}  = 2^b$, with $b=2n$,  $n=\delta m$
with $\delta \geq 4$ and $m\geq  2$. These conditions imply that $b\geq
16$ and $n\geq 8$.

\subsection{The affine case}

Suppose  case (1)  of  Theorem~\ref{Li} holds,  that  is, $\Gamma  \le
\AGL(2 n, 2)$.  Then $\AGL(2 n, 2)$ should contain the cyclic subgroup
$\Z_{2^{n}}$.

It is well known that  if $p$ is a  prime, then the
exponent of  the $p$-Sylow subgroup of  $\GL(2 n, p)$  is the smallest
power $p^{k}$ such that $p^{k} \ge 2 n$.
In our case the exponent of the $2$-Sylow subgroup of $\GL(2 n, 2)$ is
the smallest power $2^{k} \ge  2 n$, so that $k \ge \log_{2}(n) + 1$, and
\begin{equation*}
  k 
  = 
  \lceil \log_{2}(n)  + 1 \rceil 
  = 
  \lceil \log_{2}(n) \rceil  + 1.
\end{equation*}
Since $\AGL(2 n, 2)$ is the extension of an elementary abelian group
by $\GL(2 n, 2)$,
the exponent of the $2$-Sylow subgroup of $\AGL(2 n, 2)$ can only
increase by
a factor of two with respect to that of  $\GL(2 n, 2)$. Therefore if there
is an element of order $2^{n}$ in 
$\AGL(2 n, 2)$, then
\begin{equation*}
  \lceil \log_{2}(n) \rceil + 2 \ge n,
\end{equation*}
which fails for $n > 5$. (Recall that we have $n \ge 8$.)

\subsection{The wreath product case}
\label{sec:wreath}

This is  case III(b)  (wreath product in product action)
of~\cite{LPS}.  
Therefore
\begin{equation*}
  V = W_{1} \times \dots \times W_{l},
\end{equation*}
with $K_{i}$ acting  transitively on the subsets $W_{i}$,  each of the
latter having  cardinality $c >  1$.  Since  $T_{i}$ is a  subgroup of
order  $c$  of  the $\boxplus$-translation  group  $\mathcal{T}  \cong
\Z_{2^{n}} \times  \Z_{2^{n}}$, and $\mathcal{T} =  T_{1} \times \dots
\times T_{l}$, it follows that $l =  2$, $c = 2^{n}$, and $T_{i} \cong
\Z_{2^{n}}$.   By  Lemma~\ref{lemma:trivobs}, $T_{i}  =  \Set{\rho_{u}
  : u \in  U_{i}}$, where the $U_{i}$ are subgroups  of $V$.  Since
$\mathcal{T}$ acts  regularly on  $V$, we  have $W_{i} =  0 K_{i}  = 0
T_{i} = U_{i}$ , so that the $W_{i}$ are subgroups of $V$.

Since $\Gamma = \Span{ \mathcal{T}, \Sigma }$, $\mathcal{T}$ is
contained in the normal subgroup $K_{1}
\times K_{2}$, and $P$ permutes the $K_{i}$ by conjugation, it follows
that
\begin{equation*}
  \Sigma^{-1} K_{1} \Sigma = K_{2}.
\end{equation*}

We have thus
\begin{equation}\label{eq:thefinalcut}
  W_{1} \Sigma 
  =
  0 K_{1} \Sigma
  =
  0 \Sigma K_{2}
  =
  0 \Sigma T_{2} 
  =
  (0, 0 S) \boxplus W_{2}.
\end{equation}
(Here and in the following, recall~\eqref{eq:Sigma}
and~\eqref{eq:Sigma-acts}.) 

We now prove that~\eqref{eq:thefinalcut} cannot hold, with
arguments similar to those 
of Section~\ref{subsec:diagonal}.  

We  appeal once  again to  Goursat's Lemma  to describe  the subgroups
$W_{1}, W_{2}$ of $V = V^{1} \times V^{2}$. Note that, in the notation
of  Theorem~\ref{theorem:Goursat},  the  subgroup  $U_{\psi}$  of  the
direct product  contains $B \times  D$. Since $W_{1}  \cong \Z_{2^{n}}
\cong W_{2}$ are  indecomposable, one of $B$ and $D$  must be trivial.
In~\eqref{eq:upsi} of Lemma~\ref{lemma:psiforphi}, $D$ is the image of
$B$ under an endomorphism, and in~\eqref{eq:upsi2} $B$ is the image of
$D$  under  an  endomorphism.  It  follows that  in  the  notation  of
Lemma~\ref{lemma:psiforphi} $W_{1}, W_{2}$ are of one of the two forms
\begin{equation*}
  \Set{
    (x, x \sigma)
    :
    x \in \Z_{2^{n}}
  },
  \qquad
  \Set{
    (y \tau, y)
    :
    y \in \Z_{2^{n}}
  },
\end{equation*}
where $\sigma, \tau \in \End(\Z_{2^{n}})$. There are four
cases to consider.

The first case is 
\begin{equation*}
  W_{1} = \Set{
    (x, x \sigma)
    :
    x \in \Z_{2^{n}}
  },
  \quad
  W_{2} = \Set{
    (y, y \tau)
    :
    y \in \Z_{2^{n}}
  },
\end{equation*}
for $\sigma, \tau \in \End(\Z_{2^{n}})$. In this case
\eqref{eq:thefinalcut} states that for each
$y \in \Z_{2^{n}}$ there is a unique $x \in \Z_{2^{n}}$ such that
\begin{equation*}
  (x \sigma, x + x \sigma S) = (y, y \tau \boxplus 0 S).
\end{equation*}
Therefore $y = x \sigma$, and $\sigma \in \Aut(\Z_{2^{n}})$. Set $x =
2^{n-1}$. We get
\begin{equation*}
  2^{n-1} + 2^{n-1} S = 2^{n-1} \tau \boxplus 0 S.
\end{equation*}
If $\tau$ is
also an automorphism, we get $2^{n-1} S = 0S$, a contradiction to the
fact that $S$ is bijective. If $\tau$ is a \emph{proper}
endomorphism, that is, an endomorphism which is not an automorphism,
we get 
\begin{equation}\label{eq:ohno}
  2^{n-1} + 2^{n-1} S  = 0 S.
\end{equation}
Regarding this as an identity in $V^{0}$, it states that $0 S$ and
$2^{n-1} S$ differ only in the last bit. Clearly $0$ and $2^{n-1}$
differ only in the last bit, so that $0 \gamma$ and $2^{n-1} \gamma$
differ only in their component in $V^{0}_{\delta}$. But then, once one
applies the right rotation $R$ by $r$ bits, with 
$m \le r \le (\delta- 1) m$, we have
that the components in $V^{0}_{\delta}$ of 
$0 S = 0 \gamma R$ and $2^{n-1} S = 2^{n-1} \gamma R$   coincide,
contradicting~\eqref{eq:ohno}. 

The second case is
\begin{equation*}
  W_{1} = \Set{
    (x, x \sigma)
    :
    x \in \Z_{2^{n}}
  },
  \quad
  W_{2} = \Set{
    (y \tau, y)
    :
    y \in \Z_{2^{n}}
  },
\end{equation*}
for $\sigma, \tau \in \End(\Z_{2^{n}})$. We thus have that for each
$x \in \Z_{2^{n}}$ there is a unique $y \in \Z_{2^{n}}$ such that
\begin{equation*}
  (x \sigma, x + x \sigma S) = (y  \tau, y \boxplus 0 S).
\end{equation*}
Setting $x = 0$, we see that $\tau$ is an automorphism, and similarly
$\sigma$ is an automorphism. Setting $x = 2^{n-1}$, we have also $y =
2^{n-1}$, so that we get once more 
\begin{equation*}
  2^{n-1} S = 0 S.
\end{equation*}

The third case is
\begin{equation*}
  W_{1} = \Set{
    (x \sigma, x)
    :
    x \in \Z_{2^{n}}
  },
  \quad
  W_{2} = \Set{
    (y, y  \tau)
    :
    y \in \Z_{2^{n}}
  },
\end{equation*}
for $\sigma, \tau \in \End(\Z_{2^{n}})$. Thus we have that for each $x
\in \Z_{2^{n}}$, there is a unique $y \in \Z_{2^{n-1}}$ such
that
\begin{equation*}
  (x, x \sigma  + x S) = (y, y \tau \boxplus 0 S).
\end{equation*}
Therefore $x = y$, and for each
$y \in \Z_{2^{n}}$ we have
\begin{equation*}
  y \sigma + y S = y \tau \boxplus 0 S.
\end{equation*}
If   $\sigma,   \tau$  are   both   automorphisms,   or  both   proper
endomorphisms, setting $y  = 2^{n-1}$ we get once more  $2^{n-1} S = 0
S$, a contradiction. If one of  $\sigma, \tau$ is an automorphism, and
the other is a proper endomorphism,  then setting $y = 2^{n-1}$ we get
as above
\begin{equation*}
  2^{n-1} S + 2^{n-1} = 0 S,
\end{equation*}
a contradiction.

The fourth case is
\begin{equation*}
  W_{1} = \Set{
    (x \sigma, x)
    :
    x \in \Z_{2^{n}}
  },
  \quad
  W_{2} = \Set{
    (y \tau, y)
    :
    y \in \Z_{2^{n}}
  },
\end{equation*}
for $\sigma, \tau \in \End(\Z_{2^{n}})$.  We thus have that for each
$x \in \Z_{2^{n}}$ there is a unique $y \in \Z_{2^{n}}$ such that
\begin{equation*}
  (x, x \sigma + x S) = (y  \tau, y \boxplus 0 S).
\end{equation*}
It follows that $\tau$ is an  automorphism, and $y = x \tau^{-1}$. Thus for
each $x \in \Z_{2^{n}}$ one has
\begin{equation*}
  x \sigma + x S = x \tau^{-1} \boxplus 0 S,
\end{equation*}
so this case reduces to the previous one.

\subsection{The almost simple case}
\label{subsec:almostsimple}
 
In the almost simple case (3)~of  Theorem~\ref{Li}, note that $K$ is a
transitive  subgroup   of  the   primitive  group  $\Gamma$,   so  the
intersection  of a  one-point stabiliser  in  $\Gamma$ with  $K$ is  a
proper  subgroup  of $K$  of  index  $2^{b}$,  with  $b \ge  16$.   By
Theorem~1  and Section~(3.3)  in~\cite{CGC-alg-art-guralnick83}, there
are two possibilities for $K$.

The first possibility is for $K$ to be   the
group $\PSL_\alpha(\beta)$, where in our case
\begin{enumerate}
\item[(i)] $(\beta^\alpha-1)/(\beta-1)= 2^{b}$;
\item[(ii)] $\beta$ is a power $\pi^e$  of a prime $\pi$;
\item[(iii)] $\alpha$ is a prime such that if $\alpha>2$ then  $\pi \equiv
    1 \pmod{\alpha}$.
\end{enumerate}
(i) implies that $\beta$, and thus $\pi$, are odd.
Hence
\begin{equation*}
  2^{b} = (\beta^\alpha-1)/(\beta-1) =
  \beta^{\alpha-1}+\beta^{\alpha-2}+\cdots+\beta+1\equiv  
  \alpha\pmod 2
\end{equation*}
so that $\alpha=2$. Thus
\begin{equation*}
  \pi^{e} = \beta = 2^{b}-1 = (2^{n}-1)(2^{n}+1), 
\end{equation*}
where both factors of the last term are greater than $1$, as $n > 1$.
If $e = 1$, this  
contradicts the  fact that $\pi$  is a prime.  If $e > 1$,  then $\pi$
divides both $2^{n}-1$ and  $2^{n}+1$, which contradicts the fact that
these two numbers are coprime.



The  other  possibility  is  for  $K$  to  be  the  alternating  group
$\Alt(2^{b})$  of degree  $2^{b}$.   Since the  automorphism group  of
$\Alt(2^{b})$  is $\Sym(2^{b})$,  we  obtain that  $\Gamma$ is  either
$\Alt(2^{b})$ or $\Sym(2^{b})$.  In view of Lemma~\ref{lemma:even},
we have $\Gamma = \Alt(V)$, as claimed.

\newenvironment{acknowledgements}[1][]{\section*{Acknowledgements}#1}%

\begin{acknowledgements}
  The authors are indebted to R{\"u}diger Sparr and Ralph Wernsdorf
  for reading a previous version and suggesting several changes,
  pointing out in particular a serious oversight on our part regarding
  the parity of permutations, and providing a shorter argument for
  Subsection~\ref{subsec:almostsimple}. 
\end{acknowledgements}


\providecommand{\bysame}{\leavevmode\hbox to3em{\hrulefill}\thinspace}
\providecommand{\MR}{\relax\ifhmode\unskip\space\fi MR }
\providecommand{\MRhref}[2]{%
  \href{http://www.ams.org/mathscinet-getitem?mr=#1}{#2}
}
\providecommand{\href}[2]{#2}

\end{document}